\documentclass[a4paper,twoside,11pt]{amsart}
\usepackage[a4paper,left=3cm,right=2.5cm, top=3cm, bottom=3cm]{geometry}

\usepackage{algorithm}
\usepackage{algpseudocode}
\usepackage{amsaddr}
\usepackage{amssymb}
\usepackage{bbm}
\usepackage{caption}
\usepackage{float}
\usepackage{graphicx}
\usepackage{geometry} 
\usepackage{mathtools}
\usepackage{setspace}
\usepackage{subcaption}
\usepackage{tabu}
\usepackage[table]{xcolor}
\usepackage{nicefrac}

\geometry{a4paper} 

\graphicspath{{Figures/}}

\newcommand{\cC}{\mathcal{C}}
\newcommand{\chU}{\hat{\cU}}
\newcommand{\cL}{\mathcal{L}}
\newcommand{\cO}{\mathcal{O}}
\newcommand{\cP}{\mathcal{P}}
\newcommand{\cQ}{\mathcal{Q}}
\newcommand{\cS}{\mathcal{S}}
\newcommand{\cT}{\mathcal{T}}
\newcommand{\cU}{\mathcal{U}}
\newcommand{\cX}{\mathcal{X}}

\newcommand{\dd}{\mathrm{d}}
\newcommand{\dist}{\mathrm{dist}}
\newcommand{\e}{E}
\newcommand{\Fr}{\mathfrak{F}}
\newcommand{\Ft}{\mathfrak{F}_t}
\newcommand{\he}{\hat{\e}}

\newcommand{\hU}{\hat{U}}
\newcommand{\im}{i}

\newcommand{\mR}{\mathbb{R}}

\newcommand{\tF}{\tilde{F}}
\newcommand{\tomega}{\tilde{\omega}}
\newcommand{\tshift}{(\vhr\cdot\vx)/c_0}

\newcommand{\tU}{\tilde{U}}
\newcommand{\tz}{\tilde{z}}
\newcommand{\tZ}{\tilde{Z}}

\newcommand{\vhr}{\hat{\vct{r}}}

\newcommand{\vs}{\vct{s}}
\newcommand{\vx}{\vct{x}}
\newcommand{\vy}{\vct{y}}

\newcommand{\vzero}{\vct{0}}

\newcommand{\vct}[1]{\mathbf{#1}}

\newtheorem{thm}{Theorem}[section]
\newtheorem{lem}[thm]{Lemma}

\newtheorem{rem}[thm]{Remark}

\newtheorem{dfn}[thm]{Definition}

\definecolor{bl}{rgb}{0 0 0}

\definecolor{ilariablue}{rgb}{0 0.4 0.85}
\definecolor{ilariared}{rgb}{0.95 0.1 0}

\definecolor{sjoerdgreen}{rgb}{0 0.7 0.2}

\definecolor{dmitrycol}{rgb}{0.3 0.1 0.8}

\title[An Adaptive FEM for scattering problems]{An adaptive finite element method for high-frequency scattering problems with smoothly varying coefficients} 
\author{Anton Arnold$^{1*}$, Sjoerd Geevers$^{2*}$, Ilaria
  Perugia$^{2*}$, Dmitry Ponomarev $^{1,3*}$}
\address{{\small
$^1$ Institute for Analysis and Scientific Computing, Vienna
University of Technology\\
Wiedner Hauptstrasse 8-10, 1040 Vienna, Austria\\
$^2$ Faculty of Mathematics, University of Vienna\\
Oskar-Morgenstern-Platz 1, 1090 Vienna, Austria\\
$^3$ St. Petersburg Department of V. A. Steklov Mathematical Institute, RAS,\\
Fontanka 27, 191023 St. Petersburg, Russia} 
}
\thanks{*A. Arnold, S. Geevers, and I. Perugia have been funded by the
Austrian Science Fund (FWF)
through the project F~65 ``Taming Complexity in Partial Differential
Systems''. I. Perugia has also been funded by the FWF through the
project P~29197-N32. A. Arnold and D. Ponomarev were supported by the bi-national FWF-project I3538-N32}

\begin{document}

\maketitle

\centerline{\today}

\begin{abstract}
We introduce a new numerical method for solving
time-harmonic acoustic scattering problems.
The main focus is on plane waves scattered by smoothly varying material inhomogeneities. The proposed method works for any frequency $\omega$, but is especially efficient for high-frequency problems. It is based on a time-domain approach and consists of three steps:
\emph{i)} computation of a suitable incoming plane wavelet with compact support in the propagation direction; 
\emph{ii)} solving a scattering problem in the time domain for the incoming plane wavelet;
\emph{iii)} reconstruction of the time-harmonic solution from the time-domain solution via a Fourier transform in time.
An essential ingredient of the new method is a front-tracking mesh adaptation algorithm for solving the problem in \emph{ii)}. By exploiting the limited support of the wave front, this allows us to make the number of the required degrees of freedom to reach a given accuracy significantly less dependent on the frequency $\omega$. 
We also present a new algorithm for computing the Fourier transform in \emph{iii)} that exploits the reduced number of degrees of freedom corresponding to the adapted meshes. 
Numerical examples demonstrate the advantages of the proposed method and the fact
that the method can also be applied with external source terms such as point sources and sound-soft scatterers. The gained efficiency, however, is limited in the presence of trapping modes.
\end{abstract}
\medskip

{\footnotesize
\noindent
{\bf Keywords} {Helmholtz equation,
    scattering problem,
    time-domain wave problem,
    limiting amplitude principle,
    adaptive FEM,
    front-tracking mesh
  }\\[0.1cm]
\noindent
{\bf Mathematics Subject Classification } {35J05, 
35L05, 
65M60, 
65M50 
}}
\medskip

\section{Introduction}
We consider time-harmonic wave scattering problems
in inhomogeneous media with smooth\-ly varying
material properties.
Such
problems become notoriously hard to solve when the angular frequency
$\omega$ is large.
To obtain a given accuracy with a standard finite
difference or finite element method requires at least $\cO(\omega^d)$
degrees of freedom~\cite{babuska1997}, where $d$ denotes the number of space dimensions. On top of that, standard iterative solvers and multigrid methods
break down or converge slowly for high frequencies \cite{ernst2012}.
 
One way to reduce the computational complexity for large frequencies
is by combining finite element methods with asymptotic methods
\cite{giladi2001, nguyen2015}.
An asymptotic method, such as the geometrical optics method, is used to determine the wave propagation directions and a plane-wave finite element method is then used to solve the scattering problem. A drawback of this approach is that standard geometrical optics does not account for diffracted fields and incorporating diffraction phenomena typically requires an \textit{ad hoc} approach. 
Instead of using an asymptotic method, one can also extract the dominant wave propagation directions from the solution of
a lower-frequency scattering problem \cite{fang2017}.

Another way to reduce the computational complexity is by using a
classical finite difference or finite element method with a standard
iterative solver, but in combination with a sweeping preconditioner
\cite{engquist2011a, engquist2011b, stolk2016}; for a more recent overview of several sweeping preconditioning methods, see \cite{gander2019}. The number of iterations then becomes nearly independent of the frequency, which means that the computational complexity scales almost as $\cO(\omega^d)$. 

Instead of solving the scattering problem directly in the frequency
domain, one can also solve the scattering problem in the time
domain. The time-harmonic solution can be obtained from a solution to
a time-dependent wave equation by exploiting the limiting amplitude
principle \cite{ladyzhenskaya1957,vainberg1966,morawetz1962} or by
applying a Fourier transformation in time.
Classical time domain methods are the finite difference and
finite element time domain methods \cite{yee1966, taflove2005}.
Time-domain methods that are specifically devised for solving frequency-domain problems include the controllability
method~\cite{bristeau1998, glowinski2006}, with its spectral version~\cite{heikkola2007a} and
its extensions~\cite{grote2019,grote2020}, the WaveHoltz
method~\cite{appelo2020}, and the time-domain preconditioner 
of~\cite{stolk2020}.

While both frequency-domain and time-domain methods are commonly used
in practice and are expected to remain relevant in the future, this
paper will focus on the time-domain approach. Some of the advantages
of time-domain methods are that they are inherently parallel and
straightforward to implement, without the need of storing Krylov
subspaces and matrix factorisations and without the need of
implementing linear solvers and moving absorbing boundary
layers. However, for classical finite difference and finite element
time domain methods, the number of degrees of freedom is at least
$\cO(\omega^d)$ and the number of time steps is at least $\cO(\omega)$
due to the CFL condition, resulting in a computational complexity of
at least $\cO(\omega^{d+1})$. In this paper, we present an adaptive
finite element time-domain method that reduces the average number of
degrees of freedom per time step to almost $\cO(\omega^{d-1})$, resulting in a computational cost that scales almost as $\cO(\omega^d)$.

The main idea of 
adaptive finite element time-domain methods is to automatically
adapt the mesh over time in such a way that fine elements are used
near the wave front and coarser elements are used away from the wave
front. The mesh adaptation algorithms are typically driven by
\textit{a posteriori} error estimators/indicators. Adaptive finite
element methods for the wave equation were studied for a conforming finite
element discretisation in space combined with a discontinuous Galerkin
discretisation in time \cite{johnson1993, li1998, thompson2005}, the
Crank--Nicholson scheme in time
\cite{bangerth2004,gorynina2019,gorynina2019b}, the implicit Euler
scheme in time \cite{bernardi2005, georgoulis2013}, and the leap-frog
scheme in time \cite{georgoulis2016}. Adaptive finite element methods
based on a discontinuous Galerkin discretisation in space were
presented and analysed in
\cite{adjerid2002,adjerid2002b,adjerid2010}. An anisotropic adaptive
mesh refinement algorithm was studied in \cite{picasso2010}. Adaptive
finite element schemes tailored for a given goal functional were
studied in \cite{bangerth1999,bangerth2010}. 
Adaptive finite element schemes have also been studied for other time-dependent problems such as the Stefan problem~\cite{nochetto1991} and nonlinear wave equations~\cite{alauzet2003}.
Here, we present a new adaptive finite element time-domain method that is tailored for efficiently solving time-harmonic scattering problems and has the following distinguishing features:
\begin{itemize}
  \item The source term is obtained from an incoming plane wavelet with compact support in the direction of propagation of width $\cO(\omega^{-1})$.
  \item The adapted meshes are obtained from a set of nested meshes that are defined \textit{a priori}.
  \item The mesh is not updated at each time step, but only after every $m$ time steps. In the numerical examples, we have $m\sim 10-100$.
  \item The time-harmonic field is obtained using an adapted algorithm for computing the Fourier transform in time that exploits the reduced number of degrees of freedom of the adapted meshes.
\end{itemize}
In the numerical section, we present an implementation of the method using a fully explicit conforming finite element time-stepping scheme combined with the perfectly matched layer of \cite{grote2010}.

The paper is organised as follows: In Section~\ref{sec:timedomain}, we
explain how we solve the time-harmonic scattering problem using a
time-domain approach. The adaptive finite element method for solving the time-dependent problem and the adapted method for computing the Fourier transform in time are then given in Section~\ref{sec:AFEM}. Details of the numerical implementation and several numerical examples are given in Section~\ref{sec:numerics}. Finally, our findings are summarised in Section~\ref{sec:conclusion}.

\section{Solving the time-harmonic scattering problem in the time domain}
\label{sec:timedomain}
We are interested in solving the time-harmonic wave scattering problem in~$d$ dimensions governed by the Helmholtz equation 
\begin{subequations}
\label{eq:Helmholtz1}
\begin{align}
-\omega^2(U_S+U_I) - \beta^{-1}\nabla\cdot (\alpha\nabla (U_S+U_I)) &= 0 &&\text{in }\mR^d, \\
[{\text{far field radiation condition on }U_S}], &  &&
\end{align}
\end{subequations}%
where $U_S=U_S(\vx)$ is the scattered wave field that needs to be resolved, $U_I=U_I(\vx)$ is a given incoming plane wave, $\omega>0$ is the angular frequency, $\nabla$ and $\nabla\cdot$ denote the gradient and divergence operator, respectively, and $\alpha=\alpha(\vx)\geq\alpha_{\min}>0$ and $\beta=\beta(\vx)\geq\beta_{\min}>0$ are two material parameters that are assumed to vary smoothly in space. Our main interest is the case where $\omega$ is large. 
We assume that there exists a bounded domain $\Omega_{in}\subset\mR^d$ such that $\alpha$ and $\beta$ are constant, say $\alpha=\alpha_0$ and $\beta=\beta_0$, in the exterior domain $\Omega_{ex}:=\mR^d\setminus\overline{\Omega}_{in}$. 
We also assume that the incoming plane wave is of the form
$U_I(\vx)=e^{\im\omega\tshift}$, with $\im$ the imaginary unit
($\im^2=-1$), $\vhr$ a unit direction vector, and $c_0:=\sqrt{\alpha_0/\beta_0}$ the wave propagation speed in the exterior domain.

We can rewrite \eqref{eq:Helmholtz1} as
\begin{subequations}
\label{eq:Helmholtz2}
\begin{align}
-\omega^2U_S - \beta^{-1} \nabla\cdot (\alpha\nabla U_S) &= F  &&\text{in }\mR^d, \\
[\text{far field radiation condition on }U_S], &  &&
\end{align}
\end{subequations}
with $F:=\omega^2U_I+\beta^{-1}\nabla\cdot (\alpha\nabla U_I)$. 
Note that $F=0$ in $\Omega_{ex}$.

\begin{rem}
In acoustic scattering, $U_S$ is the scattered pressure field, $\alpha$ is the reciprocal of the mass density $\rho=\rho(\vx)$ of the medium, 
and $\beta$ is the reciprocal of $\rho c^2$, with $c=c(\vx)$ being the wave propagation speed of the medium. 
\end{rem}

A common way to obtain the scattered wave field $U_S$ is by solving a
wave scattering problem in the time domain for a time-harmonic source
term of the form $F(\vx)e^{-\im\omega t}$. 
If the \emph{limiting amplitude principle} is valid \cite{odeh1961,eidus1969},
the time-dependent scattered wave field converges to
$U_S(\vx)e^{-\im\omega t}$ as $t$ tends to infinity; see Appendix~\ref{sec:appFT}. 

Alternatively, we can compute the scattered wave field $u_S(\vx,t)$ for a suitable source term $f(\vx,t)$ with compact support in
space and time. Let $\Ft$ denote the Fourier transform with respect to time,
namely $\Ft[\varphi](\omega')=\int_{\mathbb R} e^{-\im\omega' t}\varphi(t)\,\dd t$. 
If $\Ft[f](\cdot,-\omega)=F$, then it follows from the limiting
amplitude principle that $U_S=\Ft[u_S](\cdot,-\omega)$; see
Lemma~\ref{lem:FT} in Appendix~\ref{sec:appFT}. Our proposed numerical method is based on this latter approach. In particular, we solve the scattered wave field $u_S(\vx,t)$ corresponding to a single incoming plane wavelet $u_I(\vx,t)$, defined such that $U_I=\Ft[u_I](\cdot,-\omega)$, and then compute $U_S=\Ft[u_S](\cdot,-\omega)$. By \textit{plane wavelet} we mean a plane wave with compact support in the propagation direction. We describe this approach in detail in the three steps below.

\emph{Step 1.~Defining the incoming plane wavelet.}
We consider an incoming plane wavelet of the form $u_I(\vx,t):=\omega\psi(\omega(t-\tshift))$,
where $\psi=\psi(\xi)$ is some real-valued, smooth function with $\text{supp}(\psi)=[-\xi_0,\xi_0]$, where $\xi_0>0$ is some constant independent of $\omega$, and such that its Fourier transform $\Fr[\psi]$ satisfies $\Fr[\psi](-1)=\int_{-\xi_0}^{\xi_0} e^{\im \xi}\psi(\xi) \;\dd \xi =1$. 
The incoming wave field is thus a traveling plane wavelet of amplitude $\cO(\omega)$ and with a support of width $2c_0\xi_0\omega^{-1}=\cO(\omega^{-1})$. 
The Fourier transform of $u_I$ is given by
\begin{align*}
\Ft[u_I](\vx,\tomega) &= \Ft[\omega\psi(\omega(t-\tshift))] (\vx,\tomega)  \\
&= \omega \Ft[\psi(\omega(t-\tshift))] (\vx,\tomega) \\
&= \omega e^{-\im\tomega\tshift} \Ft[\psi(\omega t)](\vx,\tomega) \\
&= e^{-\im\tomega\tshift} \Ft[\psi(t)] (\vx,{\tomega}/{\omega}) \\
&= e^{-\im\tomega\tshift} \Fr[\psi]({\tomega}/{\omega})
\end{align*}
and since $\Fr[\psi](-1)=1$, we therefore have $\Ft[u_I](\cdot,-\omega)=U_I$. 
An illustration of $u_{I}$ is given in Figure \ref{fig:uIn}. 

\medskip

\emph{Step 2.~Solving a wave scattering problem in the time-domain.}
Having defined the incoming plane wavelet $u_I$, we next solve the scattered wave field $u_S(\vx,t)$ given by the wave equation
\begin{subequations}
\label{eq:wave_S1}
\begin{align}
\partial_t^2(u_S+u_I) - \beta^{-1}\nabla\cdot (\alpha\nabla (u_S+u_I)) &=0 &&\text{in }\mR^d\times(t_0,\infty), \\
[\text{zero initial conditions on $u_S$ at $t=t_0$}], & &&
\end{align}
\end{subequations}%
where $\partial_t^2$ denotes the second-order time derivative, and $t_0:=\inf_{\vx\in\Omega_{in}}\tshift-\xi_0 \omega^{-1}$ is the time when the incoming plane wavelet first enters $\Omega_{in}$. We can rewrite this equation as
\begin{subequations}
\label{eq:wave_S2}
\begin{align}
\partial_t^2u_S - \beta^{-1}\nabla\cdot (\alpha\nabla u_S) &= f &&\text{in }\mR^d\times(t_0,\infty), \label{eq:wave_S1a} \\
[\text{zero initial conditions on $u_S$ at $t=t_0$}], & &&
\end{align}
\end{subequations}%
where $f:= - \partial_t^2u_I + \beta^{-1}\nabla\cdot (\alpha\nabla u_I)$. Note that $f$ has only support in $\Omega_{in}\times(t_0,t_f)$, where $t_f:=\sup_{\vx\in\Omega_{in}} \tshift + \xi_0 \omega^{-1}$ is the time when the incoming plane wavelet leaves $\Omega_{in}$. Furthermore, since $\Ft[u_I](\cdot,-\omega)=U_I$, it follows that $\Ft[f](\cdot,-\omega)=F$.

\medskip

{\emph{Step 3.~Reconstructing the Helmholtz solution using a Fourier transform.}}
Since $\Ft[f](\cdot,-\omega)=F$, it follows that, after extending $u_S$ by zero in $\mR^d\times(-\infty,t_0)$, we have $\Ft[u_S](\cdot,-\omega)=U_S$.
Having computed the scattered wave field $u_S$ given by \eqref{eq:wave_S2}, we can thus reconstruct the solution to 
the Helmholtz equation given in~\eqref{eq:Helmholtz2} by computing the limit
\begin{align}
\label{eq:U_S}
U_S &=\Ft[u_S](\cdot,-\omega) = \lim_{t\rightarrow\infty}  \int_{t_0}^{t} e^{\im\omega \tau}u_S(\cdot,\tau) \;\dd\tau.
\end{align}

\medskip

\begin{figure}[h]
\begin{center}
\includegraphics[width=0.32\textwidth]{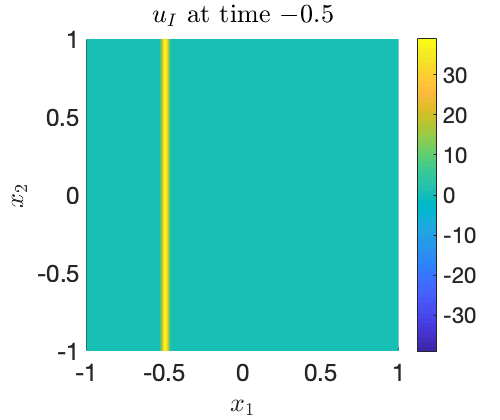}
\includegraphics[width=0.32\textwidth]{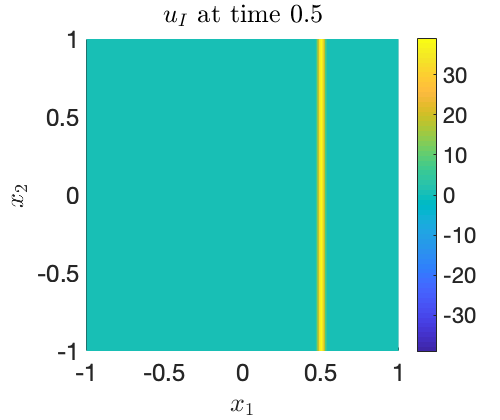}
\includegraphics[width=0.32\textwidth]{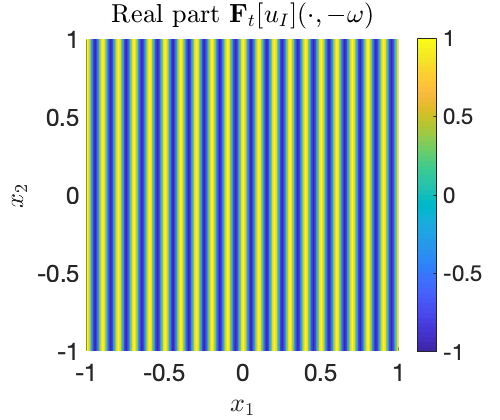}
\end{center}
\caption{Illustration of the incoming plane wavelet $u_I(\vx,t)=\omega\psi(\omega(t-\tshift))$ at time $t=-0.5$ (left) and $t=0.5$ (middle), and its Fourier transform $\Ft[u_I](\cdot,-\omega)$ (right). We chose $\omega=20\pi$, $\vhr=(1,0)$, $c_0=1$, and $\psi$ defined as in Section \ref{sec:num1D}.}
\label{fig:uIn}
\end{figure}

The advantage of this 
approach is that the scattered wave field $u_S$ corresponds to an incoming plane wavelet of amplitude $\cO(\omega)$ and with a support of width $\cO(\omega^{-1})$. 
Motivated by geometric optics, we expect that, when $\omega$ is large and when there are no
trapping modes, the solution $u_S$ is 
a travelling wave that has a steep gradient near the wave front, and a small gradient that is more or less independent of $\omega$ everywhere else; see also Figure \ref{fig:uh} in Section~\ref{sec:numerics}.
In the finite element approximation of $u_S$, we can exploit this property and significantly reduce the computational cost by using an adaptive, time-dependent spatial mesh, where a fine mesh is used near the wave front and a coarser mesh, with mesh width independent of~$\omega$, is used elsewhere.

\begin{rem}
We can readily extend the approach for a wave scattering problem that includes a sound-soft scatterer $\Omega_{sc}$. The wave scattering problem is then given by equation \eqref{eq:Helmholtz1}, but with a spatial domain $\mR^d\setminus\Omega_{sc}$ instead of $\mR^d$, and with an additional boundary condition of the form
\begin{align*}
U_S + U_I &= 0 &&\text{on }\partial\Omega_{sc}.
\end{align*}
The approach remains identical, except that in equations \eqref{eq:wave_S1} and \eqref{eq:wave_S2} in Step 2, the spatial domain is $\mR^d\setminus\Omega_{sc}$ instead of $\mR^d$ and the additional boundary condition is of the form
\begin{align*}
u_S &= -u_I && \text{on }(t_0,\infty)\times \partial\Omega_{sc}.
\end{align*}
\end{rem}

\begin{rem}
With a slight modification, the approach can also be applied to a wave scattering problem of the form in \eqref{eq:Helmholtz2} with $F=F(\vx)$ an arbitrary external source term with bounded support. In Step 1, we then only need to define $\psi$, but not $u_I$. In Step 2, we then solve equation \eqref{eq:wave_S2} for $f(\vx,t)=\omega\psi(\omega t)F(\vx)$, with $t_0:=-\xi_0 \omega^{-1}$ the earliest time when $f(\cdot,t)$ is non-zero. Note that $f$ only has support in $\text{supp}(F)\times(-\xi_0\omega^{-1}, \xi_0\omega^{-1})$. Step 3 remains unaltered. If $F$ is a source term with very local support in space, such as a point source, we again expect that we can solve the time-domain problem efficiently using an adaptive mesh.
\end{rem}

A description of the adaptive finite element method is given in the following section.

\section{An adaptive finite element method}
\label{sec:AFEM}
We aim to solve 
the wave equation given in \eqref{eq:wave_S2} by using a finite
element method with an adapted spatial mesh that is constantly updated
over time. We simultaneously update the right-hand side of
\eqref{eq:U_S} 
by applying a discretised Fourier transform in time that exploits the reduced number of degrees of freedom of the adapted spatial meshes.

The main idea of the adaptive finite element method is as follows: we consider a finite computational domain $\Omega\supset\Omega_{in}$ with an absorbing boundary and split the time domain
into small intervals $(T_{j-1},T_j)$ of length $\cO(\omega^{-1})$.
Let $u_h$ denote the finite element approximation to $u_S$.
At the beginning of each time interval, we construct an adapted mesh $\cT_j$ of the domain $\Omega$ based on the current discrete approximation $u_h(\cdot,T_{j-1})$. 
We then project $u_h(\cdot,T_{j-1})$ 
into the finite element space of 
$\cT_j$ and solve the discretised wave equation on $\cT_j$ for the time interval $(T_{j-1},T_j)$.

For constructing $\cT_j$, we aim for the coarsest possible mesh on
which $u_S$ can still be approximated accurately during the time interval $(T_{j-1},T_j)$. To construct such a mesh, we assume that the current discrete approximation $u_h(\cdot,T_{j-1})$ is an accurate approximation of $u_S(\cdot,T_{j-1})$ and take into account that $u_S$ travels during the time interval $(T_{j-1},T_j)$ and is generated by the incoming wave $u_I$. Let $c_{\max}:=\sup_{\vx\in\Omega}\sqrt{\alpha(\vx)/\beta(\vx)}$ denote the maximum wave propagation speed. The construction of $\cT_j$ 
consists of the following steps:  
\begin{itemize}
  \item \emph{Coarsen}. Coarsen the mesh $\cT_{j-1}$ and project $u_h(\cdot,T_{j-1})$ onto the discrete
    space associated with the coarser mesh.
  \item \emph{Estimate}. Compute the projection error, i.e. compute the difference between $u_h(\cdot,T_{j-1})$ and its projection onto the coarser mesh.
  \item \emph{Mark 1}. Mark all elements of the coarser mesh where the projection error is above a certain threshold. Also mark all elements of the coarser mesh that overlap with the support of the incoming wave $u_I(\cdot,T_{j-1})$.
  \item \emph{Mark 2}. Mark all elements of the coarser mesh that are within a distance $c_{\max}(T_j-T_{j-1})$ of elements that were marked in the first round. 
  \item \emph{Refine}. Refine the coarser mesh at all marked elements to obtain $\cT_j$.
\end{itemize}

To accurately approximate the Fourier transform of $u_h$ in time, we 
normally need to sample $u_h$ at all the degrees of freedom of a uniformly fine mesh at each time step.
By computing the Fourier transform using an adapted algorithm, we can significantly reduce the average number of sampling points per time step.

A detailed description of the complete method is given in the following subsections.

\subsection{A finite element method with a time-dependent mesh}
\label{sec:AFEM1}
Consider the wave scattering problem given in \eqref{eq:wave_S2}. To
approximate the scattered wave field $u_S$ using a finite element
method, we consider a bounded polygonal domain
$\Omega\supset\Omega_{in}$, and impose an absorbing boundary condition
or add an absorbing boundary layer at $\partial\Omega$. Since, however, the main steps of our adaptive finite element method do not depend on the type of boundary condition, we consider in this section 
a zero Dirichlet boundary condition $u_S|_{\partial\Omega}=0$ in order to simplify the presentation. We thus consider a wave equation of the form
\begin{subequations}
\label{eq:wave_S3}
\begin{align}
\partial_t^2 u - \beta^{-1}\nabla\cdot(\alpha\nabla u) &= f &&\text{in }\Omega\times (t_0,\infty), \\
u(\cdot,t_0) = \partial_tu(\cdot,t_0) &= 0 &&\text{in }\Omega, \\
u&=0 &&\text{on }\partial\Omega\times (t_0,\infty).
\end{align}
\end{subequations}

Let $T_{up} = \cO(\omega^{-1})$ be the time after which the mesh is updated and define $T_j:=t_0+jT_{up}$. 
Also, let $\cT_j$ denote the simplicial/square/cubic mesh of $\Omega$ used during the time interval $(T_{j-1},T_j]$. For any mesh $\cT$, let $\cU_{\cT}$ denote the corresponding finite element space, given by
\begin{align*}
\cU_{\cT} &:= \{u\in H_0^1(\Omega) \; |\; u\circ\phi_\e \in \chU \text{ for all }\e\in\cT \},
\end{align*} 
with $\phi_\e:\he\rightarrow \e$ the affine element mapping, $\he$ the reference element, and $\chU$ the polynomial reference space. Also, let $\cL_\cT:\cU_\cT\rightarrow\cU_\cT$ denote the discretisation of the spatial operator $u\mapsto-\beta^{-1}\nabla\cdot(\alpha\nabla u)$ for a given mesh $\cT$. For simplicity, we consider here the operator $\cL_\cT$ defined such that
\begin{align*}
(\beta\cL_\cT u,w) &= (\alpha\nabla u, \nabla w) &&\forall w\in\cU_\cT,
\end{align*}
where $(\cdot,\cdot)$ denotes the $L^2(\Omega)$ or $L^2(\Omega)^d$ inner product. A slightly different discretisation that allows for an explicit expression of $\cL_\cT$ is given in Section \ref{sec:discretisation}.
Finally, let $\Pi_{j}$ denote a projection operator that projects into the space $\cU_{\cT_j}$. The semi-discrete finite element formulation can be stated as follows: for $j=1,2,\dots$, find $u_j:[T_{j-1},T_j]\rightarrow \cU_{\cT_j}$ such that
\begin{subequations}
\label{eq:FEM}
\begin{align}
\partial_t^2 u_{j} + \cL_{\cT_j}u_{j} &= f_{\cT_j} &&\text{in }\Omega\times (T_{j-1},T_j), \\
u_{j}(\cdot,T_{j-1}) &= \Pi_{j} u_{j-1}(\cdot,T_{j-1}), && \\
\partial_tu_{j}(\cdot,T_{j-1}) &= \Pi_{j}\partial_tu_{j-1}(\cdot,T_{j-1}), &&
\end{align}
\end{subequations}
with $u_{0}(\cdot,T_0)\equiv 0$, $\partial_tu_{0}(\cdot,T_0)\equiv 0$, and $f_{\cT_j}(\cdot,t)\in\cU_{\cT_j}$ a discretisation of $f(\cdot,t)$.

For the time discretisation, we consider here the central difference scheme, although the adaptive method can also be applied to other time integration schemes. Let $\Delta t = T_{up}/m$ denote the time step size, with $m>0$ some positive integer. Also, let $t^n:=t_0+n\Delta t$ and  $u_j^n:=u_j(\cdot,t^n)$. We approximate $\partial_t^2u_j(\cdot,t^n)$ by the central difference scheme
\begin{align}
\label{eq:Dt2}
D_t^2u^n_j := \frac{u^{n+1}_j-2u^n_j+u^{n-1}_j}{\Delta t^2}.
\end{align}
Now, let $n_j:=mj$. The fully discrete finite element formulation can be stated as follows: for $j=1,2,\dots$ and for $n:n_{j-1}-1\leq n \leq n_j$, find $u_j^n\in\cU_{\cT_j}$ such that
\begin{subequations}
\label{eq:FEM2}
\begin{align}
D_t^2 u_{j}^n + \cL_{\cT_j}u_{j}^n &= f_{\cT_j}(\cdot,t^n) &&\text{for }n:n_{j-1}\leq n \leq n_j-1, 
\label{eq:FEM2a} \\
u_{j}^{n} &= \Pi_{j} u_{j-1}^{n} &&\text{for }n=n_{j-1}-1 \text{ and }n=n_{j-1},
\end{align}
\end{subequations}
with $u_0^0\equiv 0$ and $u_0^{-1}\equiv 0$. We can rewrite equation \eqref{eq:FEM2a} as
\begin{align}
\label{eq:FEM2a2}
u^{n+1}_j &= -u^{n-1}_j+2u^n_j + \Delta t^2(-\cL_{\cT_j}u^n_j + f_{\cT_j}(\cdot,t^n)).
\end{align}
An extended time stepping scheme, that takes into account an absorbing boundary layer and the discretisation of $f$, is given in Section \ref{sec:discretisation}.

In practice, we cannot solve the wave equation for $t\rightarrow\infty$, but have to stop at some finite time $t_{stop}=T_{j_{stop}}$. 
To determine $j_{stop}$, we use a stopping criterion of the form
\begin{align}
\label{eq:stop}
\sup_{\vx\in\Omega} |u_j^{{n_j}}(\vx)| \leq \epsilon_0,
\end{align}
where $\epsilon_0>0$ is an \textit{a priori} chosen threshold value. In other words, we stop the computations when the scattered wave field is close to zero, which means it has almost completely left the computational domain.

An overview of how to implement the adaptive finite element method is given in Algorithm~\ref{alg:AFEM}. Here, we use the following functions:
\begin{itemize}
  \item $\cT_j=\Call{updateMesh}{\cT_{j-1},u_{j-1}^n,t^n}$: computes
    the new mesh $\cT_j$ given the current mesh $\cT_{j-1}$, the
    current discrete wave field $u_{j-1}^n$, and the current time
    $t^n$. A detailed description of $\Call{updateMesh}{}$ and how to
    choose the initial mesh $\cT_0$ is given in Section \ref{sec:mesh} below.
  \item $u_j^n:=\Call{project}{u_{j-1}^n,\cT_{j-1},\cT_j}$: computes the projection $u_j^n=\Pi_{j}u^n_{j-1}$. 
  \item $u_j^{n+1} = \Call{doTimeStep}{u_j^n,u_j^{n-1},\cT_j,t^n}$:
    computes the wave field at the next time step 
    $u_j^{n+1}$ using the formula in \eqref{eq:FEM2a2}.
  \item $\Call{Stop}{u_{j}^n,\cT_j,t^n}$: returns $\mathbf{true}$ if
    $t^n>t_f$ and if the stopping criterion given in \eqref{eq:stop} is satisfied. Returns $\mathbf{false}$ otherwise.
\end{itemize} 

\begin{algorithm}
\caption{solving the wave equation using a time-dependent mesh}
\label{alg:AFEM}
\begin{algorithmic}
\Procedure{solveWaveEquation}{}
\State $\cT_h\gets\cT_0$ \Comment{set initial mesh}
\State $u_h \gets 0$, $u_h^{new}\gets 0$, and $u_h^{old} \gets 0$ \Comment{initialise wave field}
\For{$j=1,2,\dots$} 
  \State $n=n_{j-1}$ \Comment{at this point, $u_h=u_{j-1}^{n}$, $u_h^{old}=u_{j-1}^{n-1}$, $\cT_h=\cT_{j-1}$}
  \If{$\Call{stop}{u_h,\cT_h,t^n}$}
    \State \Return{$u_h$}
  \EndIf
  \State $\cT_h^{new} \gets \Call{updateMesh}{\cT_h,u_h,t^n}$ \Comment{$\cT_h^{new}\gets\cT_{j}$}
  \State $u_h \gets \Call{project}{u_h,\cT_h,\cT_h^{new}}$ \Comment{$u_h\gets u_j^n$}
  \State $u_h^{old} \gets \Call{project}{u_h^{old},\cT_h,\cT_h^{new}}$ \Comment{$u_h^{old}\gets u_j^{n-1}$}
  \State $\cT_h \gets \cT_h^{new}$ \Comment{$\cT_h \gets \cT_j$}
  \For{$\ell=0,1,2,\dots,m-1$}
    \State $n\gets n_{j-1}+\ell$ \Comment{at this point, $u_h=u_j^n$, $u_h^{old}=u_j^{n-1}$}
    \State $u_h^{new}\gets \Call{doTimeStep}{u_h,u_h^{old},\cT_h,t^n}$ \Comment{$u_h^{new}\gets u^{n+1}_j$}
    \State $u_h^{old}\gets u_h$ 
    \State $u_h\gets u_h^{new}$ 
  \EndFor
\EndFor 
\EndProcedure
\end{algorithmic}
\end{algorithm}

\subsection{Adapting the mesh}
\label{sec:mesh}
To construct adapted meshes $\cT_j$, we define \textit{a priori} a set of nested 
meshes $\{\cT^1,\cT^2,\dots,\cT^K\}$, $K\geq 2$, where $\cT^1$ is the coarsest mesh with a mesh width $h_1$ independent of $\omega$, and $\cT^K$ is the finest mesh with a mesh width $h_K$ of order $\omega^{-1}$ or less. We assume that, in case of no mesh adaptation, the mesh $\cT^K$ is sufficiently fine for solving the wave equation with the desired accuracy.

We construct adapted meshes $\cT_j$ from elements in $\bigcup_{k=1}^K \cT^k$. 
The initial mesh $\cT_0$ is chosen as the finest mesh $\cT^K$.
The algorithm $\cT_j=\Call{updateMesh}{\cT_{j-1},u_{j-1}^n,t^n}$ for updating the mesh is given in Algorithm \ref{alg:updateMesh}, which consists of the following functions:
\begin{itemize}
  \item $\cP=\Call{getParentElements}{\cT}$: returns the set of all elements in $\bigcup_{k=1}^{K-1}\cT^k$ that are coarser than those of the given mesh $\cT$
  , i.e. it returns
\begin{align*}
\cP &=  \{ \e\in\bigcup_{k=1}^{K-1} \cT^k\setminus\cT \;|\; \e\supset \e' \text{ for some }\e'\in\cT \}.
\end{align*}
  \item $\cT=\Call{getChildElements}{\cP}$: returns the mesh $\cT\subset\bigcup_{k=1}^K \cT^k$, given its parent elements $\cP=\Call{getParentElements}{\cT}$. In particular, 
    $\Call{getChildElements}{}$ returns
$\cT = \bigcup_{\e\in\cP} \Call{getSubElements}{\e}\setminus\cP$.
  \item $\cT_\e=\Call{getSubelements}{\e}$: returns, for a given
    element $\e\in\cT^k$ with $k\leq K-1$, the set of the elements in $\cT^{k+1}$ that are a subset of $\e$. 
  \item $\cP^{*}_{j-1}=\Call{markElements}{\cP_{j-1},u_{j-1}^n,t^n}$: returns the set of all elements in $\cP_{j-1}$ that need to be refined. We mark all elements $\e\in\cP_{j-1}$ that also have subelements in $\cP_{j-1}$. For the elements $\e\in\cP_{j-1}$ that have no further subelements in $\cP_{j-1}$, we only mark those for which the function $\Call{needsRefinement}{\e,u^n_{j-1},t^n}$ returns $\mathbf{true}$. Pseudocode of the function $\Call{markElements}{}$ is given in Algorithm \ref{alg:mark}.
  \item $\Call{needsRefinement}{\e,u_{j-1}^n,t^n}$: returns $\mathbf{true}$ if and only if
\begin{align}
\label{eq:atSupport} 
\e\cap\text{support}(u_I(\cdot,t^n)) \neq \emptyset,
\end{align}
or
\begin{align}
\label{eq:projErr} 
\eta_\e := \sup_{\vx\in\e} |u_{j-1}^{n}(\vx) - \Pi_\e u^n_{j-1}(\vx)| > \eta_0,
\end{align}
where $\Pi_\e$ denotes a projection operator that projects into 
the discrete space of $\e$ and $\eta_0>0$ is some threshold value defined \textit{a priori}.  
  \item $\cP_j=\Call{markNearbyElements}{\cP^{*}_{j-1}}$: for $k=1,2,\dots,K-1$, returns all elements in $\cT^k$ that are within a distance $c_{\max}T_{up}$ of an element in $\cP^{*}_{j-1}\cap\cT^k$.
  The distance between two elements $\e_1$ and $\e_2$ is defined as $\dist(\e_1,\e_2):=\inf_{\vx\in\e_1,\vy\in\e_2} |\vx-\vy|$.
  \end{itemize}

An illustration of the mesh adaptation algorithm is given in Figure \ref{fig:meshUpdate} below.

\begin{algorithm}
\caption{update the mesh}
\label{alg:updateMesh}
\begin{algorithmic}
\Function{updateMesh}{$\cT_{j-1},u^n_{j-1},t^n$}
  \State $\cP_{j-1} \gets \Call{getParentElements}{\cT_{j-1}}$
  \State $\cP^{*}_{j-1}\gets \Call{markElements}{\cP_{j-1},u^n_{j-1},t^n}$
  \State $\cP_j\gets\Call{markNearbyElements}{\cP^{*}_{j-1}}$
  \State $\cT_j \gets \Call{getChildElements}{\cP_j}$
  \State \Return{$\cT_j$}
\EndFunction
\end{algorithmic}
\end{algorithm}

\begin{algorithm}
\caption{mark elements for refinement}
\label{alg:mark}
\begin{algorithmic}
\Function{markElements}{$\cP_{j-1},u^n_{j-1},t^n$}
  \State $\cP^{*}_{j-1} \gets \emptyset$ \Comment{initialise the set of marked elements}
  \For{$\e\in\cP_{j-1}$} 
    \If{$\Call{getSubelements}{\e}\cap{\cP_{j-1}} \neq \emptyset$}
      \State $\cP^{*}_{j-1}\gets\cP^{*}_{j-1}\cup\e$ \Comment{mark $\e$}
    \ElsIf{$\Call{needsRefinement}{\e,u^n_{j-1},t^n}$}
      \State $\cP^{*}_{j-1}\gets\cP^{*}_{j-1}\cup\e$ \Comment{mark $\e$}
    \EndIf  
  \EndFor
  \State \Return{$\cP^{*}_{j-1}$}
\EndFunction
\end{algorithmic}
\end{algorithm}

\subsection{Computing the Fourier Transformation}
\label{sec:FT}
We can approximate the Fourier transform in \eqref{eq:U_S} by a discrete Fourier transformation:
\begin{align*}
U_S=\Ft[u_S](\cdot,-\omega)=\int_{t_0}^\infty e^{\im\omega t}u_S(\cdot,t) \;\dd t \approx  \sum_{n=1}^{\infty} \Delta t e^{\im\omega t^n} u_S(\cdot,t^n).
\end{align*}
Furthermore, we can approximate $u_S(\cdot,t^n)$ by the finite element
approximation $u_h^n$, where $u_h^n:=u_j^n$ for $n:n_{j-1}+1\leq n
\leq n_j$ and where $u_j^n$ is the solution to the fully discrete problem
formulated in \eqref{eq:FEM2}. We assume that $u_S(\cdot,t^n)\approx 0$ in $\Omega$ for $n> n_{stop}:=n_{j_{stop}}$. We then obtain the approximation  
\begin{align*}
U_S \approx U_h^{n_{stop}} := \sum_{n=1}^{n_{stop}} \Delta t e^{\im\omega t^n} u_h^n.
\end{align*} 
We can compute $U_h^{n_{stop}}$ by setting $U_h^0\equiv 0$ in $\Omega$ and by using the recursive relation
\begin{align}
\label{eq:U_h1}
U_h^{n} &= U_h^{n-1} + \Delta t e^{\im\omega t^{n}}u_h^{n} &&\text{for }n=1,2,\dots,n_{stop}.
\end{align}

To accurately approximate $U_S$ on the entire computational domain
$\Omega$, we need to compute $U_h^{n_{stop}}$ on a globally fine mesh
$\cT^K$, which means that, if we use the formula in \eqref{eq:U_h1},
we would need to evaluate $u_h^n$ at each time step at all the degrees
of freedom of $\cT^K$. Since $u_h^n$ is only known at the degrees of
freedom of some adapted mesh $\cT_j$, this means we would need to
interpolate $u_h^n$ at the degrees of freedom of $\cT^K$ at each time
step. We can significantly reduce the average
number of interpolation points per time step in the computation of
$U_h^{n_{stop}}$ by using an adapted space-time mesh. 

Let $\cQ$ be a space-time mesh for the space-time domain $\Omega\times(T_0,T_{j_{stop}})$ with space-time elements $Q$ of the form $Q=\e_Q\times(T_{j_{Q,0}},T_{j_{Q,1}})$. We choose the space-time elements such that $\e_Q\in\cT_{j}$ for $j:j_{Q,0}<j\leq j_{Q,1}$. Let $\chi_{Q}(\vx,t)$ be the characteristic function given by
\begin{align*}
\chi_{Q}(\vx,t) := \begin{cases}
\chi_{\e_Q}, & t\in(T_{j_{Q,0}},T_{j_{Q,1}}], \\
0, &\text{otherwise},
\end{cases}
\qquad
\chi_\e(\vx) := \begin{cases}
1, &\vx\in \e\\
0, &\text{otherwise}.
\end{cases}
\end{align*}
A discrete version of $\chi_\e$ is given in Section
\ref{sec:discretisation} below. We have the partition of
unity property
\begin{align}
\label{eq:PU}
\sum_{Q\in\cQ} \chi_Q(\vx,t^n) &= 1 &&\text{for a.e. }\vx\in\Omega, \forall n:1\leq n\leq n_{stop}.
\end{align}
We can therefore write
\begin{align}
U_h^{n_{stop}} &= \sum_{n=1}^{n_{stop}} \Delta t e^{\im\omega t^n} u_h^n  
\overset{\eqref{eq:PU}}{=} \sum_{n=1}^{n_{stop}} \left(\sum_{Q\in\cQ} \chi_Q(\cdot,t^n)\right) \Delta t e^{\im\omega t^n} u_h^n  \nonumber \\
&= \sum_{Q\in\cQ} \left(\sum_{n=1}^{n_{stop}} \chi_Q(\cdot,t^n) \Delta t e^{\im\omega t^n} u_h^n\right) 
=: \sum_{Q\in\cQ} \Delta U_Q. \label{eq:U_h2}
\end{align}
Note that $\Delta U_Q$ has support only in $\e_Q$. 
With a slight abuse of notation, we
let $\Delta U_Q$ also denote its restriction to $\e_Q$.
Which definition is used will be clear from the context.

Let $n_{Q,0}:=n_{j_{Q,0}}$ and $n_{Q,1}:=n_{j_{Q,1}}$. We can compute
$\Delta U_Q$ by first setting $\Delta U_Q=\Delta U_Q^{n_{Q,1}}$, where
$\Delta U_Q^{n_{Q,1}}$
is computed by setting $\Delta U_Q^{n_{Q,0}}= 0$ in $\e_Q$ and by using the recursive relation
\begin{align*}
\Delta U_{Q}^{n} &= \Delta U_Q^{n-1} + \Delta te^{\im\omega t^{n}} u_h^{n}|_{E_Q} &&\text{for }n:n_{Q,0}+1\leq n \leq n_{Q,1}.
\end{align*}
Note that, in order to compute $\Delta U_Q$, we only need the values
of $u_h^n$ at the degrees of freedom corresponding to the spatial element $\e_Q$. To compute $U_h^{n_{stop}}= \sum_{Q\in\cQ} \Delta U_Q$, we need to interpolate $\Delta U_Q$ at the degrees of freedom of the finest mesh $\cT^K$ at ${E_Q}$. However, we only need to do this once for each space time element.

To minimise the computational cost, we choose the space-time elements as large as possible. This means that we choose the time intervals $(T_{Q,0},T_{Q,1})$ as large as possible, namely such that $\e_Q\in\cT_j$ for all $j:j_{Q,0}<j\leq j_{Q,1}$ and such that $\e_Q\notin \cT_j$ for $j=j_{Q,0}$ and $j=j_{Q,1}+1$. An illustration of a space-time mesh is given in Figure~\ref{fig:spaceTimeMesh} below.

In practice, we do not need to construct the space-time mesh explicitly. Let $Q(j,\e)$ denote the unique space-time element $Q\in\cQ$ such that $\e=\e_Q$ and $j_{Q,0}<j\leq j_{Q,1}$. We define $\Delta U_{j,\e}:=\Delta U_{Q(j,\e)}$ and $\Delta U_{j,\e}^n:=\Delta U_{Q(j,\e)}^n$. 
We also define 
\begin{align*}
U_h^{n_j} := \sum_{Q\in\cQ:\; j_{Q,1} \leq j} \Delta U_Q.
\end{align*}
We have that $U_h^0 \equiv 0$ and we can compute $U_h^{n_{stop}}$ by
computing,
for $j=1,2,\dots,j_{stop}$,
the following:
\begin{align*}
U_h^{n_j}&= U_h^{n_{j-1}} + \sum_{Q\in\cQ: \;j_{Q,1}=j} \Delta U_{Q},
\end{align*}
or, equivalently,
\begin{align}
\label{eq:U_h3}
U_h^{n_j}&=  U_h^{n_{j-1}} + \sum_{\e\in\cT_j\setminus\cT_{j+1}} \Delta U_{j,\e},
\end{align}
with $\cT_{j_{stop}+1}:=\emptyset$. We can compute $\Delta U_{j,\e} = \Delta U_{j,\e}^{n_j}$ by first setting $\Delta U_{0,\e}^0\equiv 0$ for all $\e\in\cT_0$ and by computing, for $j=1,2,\dots,j_{stop}$, the following:
\begin{subequations}
\label{eq:DU}
\begin{align}
\Delta U_{j,\e}^{n} &= \Delta U_{j,\e}^{n-1} + \Delta t e^{\im \omega t^{n}} u_h^{n}|_{\e} &&\forall \e\in\cT_j \And n:n_{j-1}+1\leq n\leq n_{j}, \\
\Delta U_{j,\e}^{n_{j-1}} &\equiv 0 &&\forall\e\in \cT_j\setminus\cT_{j-1}, \\
\Delta U_{j,\e}^{n_{j-1}} &= \Delta U_{j-1,\e}^{n_{j-1}} && \forall\e\in \cT_j\cap\cT_{j-1}.
\end{align}
\end{subequations}

Therefore, letting 
$\Delta U_j^n:=\{\Delta U_{j,\e}^n\}_{\e\in\cT_j}$, the algorithm for computing $U_h^{n_{stop}}$ is given in Algorithm \ref{alg:FT}, which consists of the following functions:
\begin{itemize}
  \item $U_h=\Call{updateFT}{U_h,\Delta U_{j-1}^n,\cT_{j-1},\cT_j}$: computes $U_h|_\e\gets U_h|_\e + \Delta U_{j-1,\e}^n$ for all $\e\in\cT_{j-1}\setminus\cT_j$.
  \item $\Delta U_j^n=\Call{initialiseNewIncrements}{\Delta U_{j-1}^n, \cT_{j-1},\cT_j}$: sets $\Delta U_{j,\e}^n\gets 0$ for all $\e\in\cT_j\setminus\cT_{j-1}$ and $\Delta U_{j,\e}^n\gets \Delta U_{j-1,\e}^{n}$ for all $\e\in \cT_j\cap\cT_{j-1}$.
  \item $\Delta U_j^n = \Call{updateIncrements}{\Delta U_{j}^{n-1}, u_h^n, \cT_j,t^n}$: computes $U_{j,\e}^n=U_{j,\e}^{n-1}+\Delta t e^{\im\omega t^n}u_h^n|_\e$ for all $\e\in\cT_j$.
\end{itemize}

\begin{algorithm}
\caption{compute the discrete Fourier transform with respect to time}
\label{alg:FT}
\begin{algorithmic}
\Function{computeFT}{$\{u_h^n\}_{n=1}^{n_{stop}},\{\cT_j\}_{j=1}^{j_{stop}}$}
  \State $U_h\gets 0$ \Comment{initialise Fourier transform}
  \State $\Delta U_h\gets 0$ \Comment{initialise increments}
  \For{$j=1,2,\dots,j_{stop}$}
    \State $n\gets n_{j-1}$ \Comment{at this point, $U_h=U_h^{n_{j-2}}$, $\Delta U_h=\Delta U_{j-1}^{n}$}
    \State $U_h \gets \Call{updateFT}{U_h,\Delta U_h,\cT_{j-1},\cT_{j}}$ \Comment{$U_h\gets U_h^{n}$}
    \State $\Delta U_h \gets \Call{initialiseNewIncrements}{\Delta U_h,\cT_{j-1},\cT_j}$ \Comment{$\Delta U_h\gets \Delta U_j^{n}$}
    \For{$\ell=1,2,\dots,m$}
      \State $n\gets n_{j-1}+\ell$
      \State $\Delta U_h \gets \Call{updateIncrements}{\Delta U_h,u_h^n,\cT_j,t^n}$ \Comment{$\Delta U_h\gets \Delta U_j^{n}$}
    \EndFor
  \EndFor
  \State $U_h \gets \Call{updateFT}{U_h,\Delta U_h, \cT_{j_{stop}},\emptyset}$ \Comment{$U_h\gets U_h^{n_{stop}}$}
  \State \Return{$U_h$}
\EndFunction
\end{algorithmic}
\end{algorithm}

\subsection{Overview of the complete algorithm}
\label{sec:overview}
To approximate the solution $U_S$ to the Helm\-holtz equation given in
\eqref{eq:Helmholtz2}, we approximate the solution $u_S$ to the
time-dependent wave equation in \eqref{eq:wave_S2} using Algorithm
\ref{alg:AFEM} and then approximate the Fourier transform
$\Ft[u_S](\cdot,-\omega)$ using Algorithm \ref{alg:FT}. We can solve
the wave equation and compute the Fourier transform simultaneously,
resulting in Algorithm \ref{alg:Helmholtz}. This last algorithm gives
a complete
overview of the proposed method 
for solving the Helmholtz equation.

\begin{algorithm}
\caption{solving the Helmholtz equation using a time-domain approach}
\label{alg:Helmholtz}
\begin{algorithmic}
\Procedure{solveHelmholtzEquation}{}
\State $\cT_h\gets\cT^K$ \Comment{set initial mesh}
\State $u_h \gets 0$, $u_h^{new}\gets 0$, and $u_h^{old} \gets 0$ \Comment{initialise wave field}
\State $U_h\gets 0$ \Comment{initialise Fourier transform}
\State $\Delta U_h\gets 0$ \Comment{initialise increments}
\For{$j=1,2,\dots$} 
  \State $n=n_{j-1}$ \Comment{at this point, $\cT_h=\cT_{j-1}$, $u_h=u_{j-1}^{n}, u_h^{old}=u_{j-1}^{n-1}$}
  \State \Comment{also, at this point, $U_h=U_h^{n_{j-2}}$, $\Delta U_h=\Delta U_{j-1}^n$}
  \If{$\Call{stop}{u_h,\cT_h}$}
    \State $U_h \gets \Call{updateFT}{U_h,\Delta U_h, \cT_h,\emptyset}$ \Comment{$U_h\gets U_h^{n}$}
    \State \Return{$U_h$}
  \EndIf
  \State $\cT_h^{new} \gets \Call{updateMesh}{\cT_h,u_h,t^n}$ \Comment{$\cT_h^{new}\gets\cT_{j}$}
  \State $U_h \gets \Call{updateFT}{U_h,\Delta U_h,\cT_{h},\cT_{h}^{new}}$ \Comment{$U_h\gets U_h^{n}$}
  \State $\Delta U_h \gets \Call{initialiseNewIncrements}{\Delta U_h,\cT_h,\cT_h^{new}}$ \Comment{$\Delta U_h\gets \Delta U_j^{n}$} 
  \State $u_h \gets \Call{project}{u_h,\cT_h,\cT_h^{new}}$ \Comment{$u_h\gets u_j^n$}
  \State $u_h^{old} \gets \Call{project}{u_h^{old},\cT_h,\cT_h^{new}}$ \Comment{$u_h^{new}\gets u_j^{n-1}$}
  \State $\cT_h \gets \cT_h^{new}$ \Comment{$\cT_h \gets \cT_j$}
  \For{$\ell=0,1,2,\dots,m-1$}
    \State $n\gets n_{j-1}+\ell$ \Comment{at this point, $u_h=u_j^n, u_h^{old}=u_j^{n-1}, \Delta U_h = \Delta U_j^n$}
    \State $u_h^{new}\gets \Call{doTimeStep}{u_h,u_h^{old},\cT_h,t^n}$ \Comment{$u_h^{new}\gets u^{n+1}_j$}
    \State $u_h^{old}\gets u_h$ \Comment{$u_h^{old}\gets u^{n}_j$}
    \State $u_h\gets u_h^{new}$ \Comment{$u_h\gets u^{n+1}_j$}
    \State $\Delta U_h \gets \Call{updateIncrements}{\Delta U_h,u_h,\cT_h,t^{n+1}}$ \Comment{$\Delta U_h\gets \Delta U_j^{n+1}$}
  \EndFor
\EndFor 
\EndProcedure
\end{algorithmic}
\end{algorithm}

\section{Numerical Examples}
\label{sec:numerics}
We present numerical examples for wave scattering problems in 1 and 2 spatial dimensions. Details of the finite element discretisation are provided for the 2-dimensional case. The discretisation for the 1-dimensional case can be readily deduced from the 2-dimensional case. All the numerical experiments presented in this section have been carried out in MATLAB R2017a.

\subsection{Absorbing boundary layer and finite element discretisation}
We start by specifying how we impose 
an absorbing boundary layer
and then provide details of the finite element discretisation.

\subsubsection{Absorbing boundary layer}
We consider the wave equation in \eqref{eq:wave_S3} for a rectangular
domain and add an additional absorbing boundary layer. Let $\Omega_0 =
(-L_1,L_1)\times(-L_2,L_2)\supset\Omega_{in}$ be the region of
interest, and let $\Omega_{ABL}$ be an additional absorbing boundary layer
of width $W$ surrounding $\Omega_0$. We define 
the computational domain by
$\Omega={\Omega}_0\cup\Omega_{ABL}=(-L_1-W,L_1+W)\times(-L_2-W,L_2+W)$. We
apply the perfectly matched layer 
that was introduced in \cite{grote2010} and further analysed in \cite{kaltenbacher2013,baffet2019}. The resulting wave equation is given by
\begin{subequations}
\label{eq:PML}
\begin{align}
\partial_t^2 u + (\zeta_1+\zeta_2)\partial_tu + \zeta_1\zeta_2u- \beta^{-1}\nabla\cdot(\alpha(\nabla u + \vs)) &= f &&\text{in }\Omega\times(t_0,\infty), \\
\partial_t \vs + Z_1\vs + Z_2\nabla u &= \vzero &&\text{in }\Omega\times(t_0,\infty), \\ 
u(\cdot,t_0)=\partial_tu(\cdot,t_0)&= 0 &&\text{in }\Omega, \\
\vs(\cdot,t_0) &= \vzero &&\text{in }\Omega, \\
u &= 0 &&\text{on }\partial\Omega\times(t_0,\infty),
\end{align}
\end{subequations}
with
\begin{align*}
Z_1 := \begin{bmatrix} \zeta_1 & 0 \\ 0 & \zeta_2 \end{bmatrix} \text{ and }
Z_2 := \begin{bmatrix} \zeta_1-\zeta_2 & 0 \\ 0 & \zeta_2-\zeta_1 \end{bmatrix}. 
\end{align*}
Here, $\vs=\vs(\vx)=(s_1(\vx),s_2(\vx))$ is an auxiliary vector field
that has support only in $\Omega_{ABL}$, and $\zeta_1=\zeta_1(x_1)\geq 0$ and $\zeta_2=\zeta_2(x_2)\geq 0$ are two additional parameters that are nonzero only in $\Omega_{ABL}$. In particular, $\zeta_1(x_1)$ is nonzero only for $x_1:L_1\leq|x_1|\leq L_1+W$ and $\zeta_2(x_2)$ is nonzero only for $x_2:L_2\leq|x_2|\leq L_2+W$.

\subsubsection{Finite element discretisation}
\label{sec:discretisation} 
To construct the adapted meshes, we use a set of nested rectangular meshes $\{\cT^k\}_{k=1}^K$ that are constructed as follows. First, we define a sequence of mesh widths $h_1>h_2>\dots>h_K$. Mesh $\cT^k$ is then constructed using elements of size $h_{k,1}\times h_{k,2}$, where $h_{k,i}=h_k$ in the region $|x_i|<L_i$ and $h_{k,i}=h_K$ in the region $|x_i|>L_i$, for $i=1,2$. In other words, $\cT^k$ is a Cartesian mesh of width $h_k$ in the main domain $\Omega_0$, but has thin elements of the finest resolution $h_K$ in the absorbing boundary layer $\Omega_{ABL}$. An illustration of the nested meshes is given in Figure \ref{fig:meshes}.

\begin{figure}[h]
\centering
\includegraphics[width=0.3\textwidth]{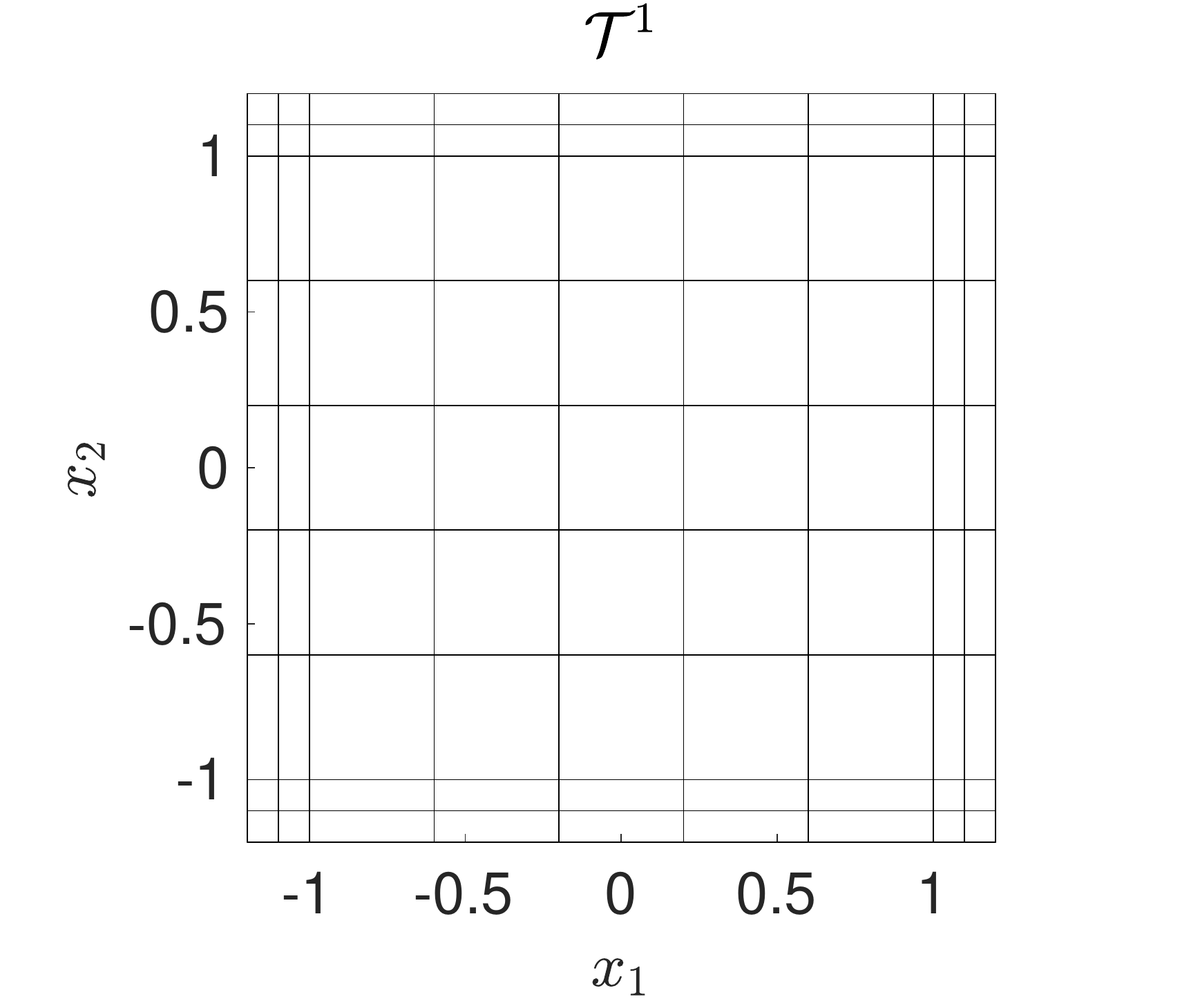}\,
\includegraphics[width=0.3\textwidth]{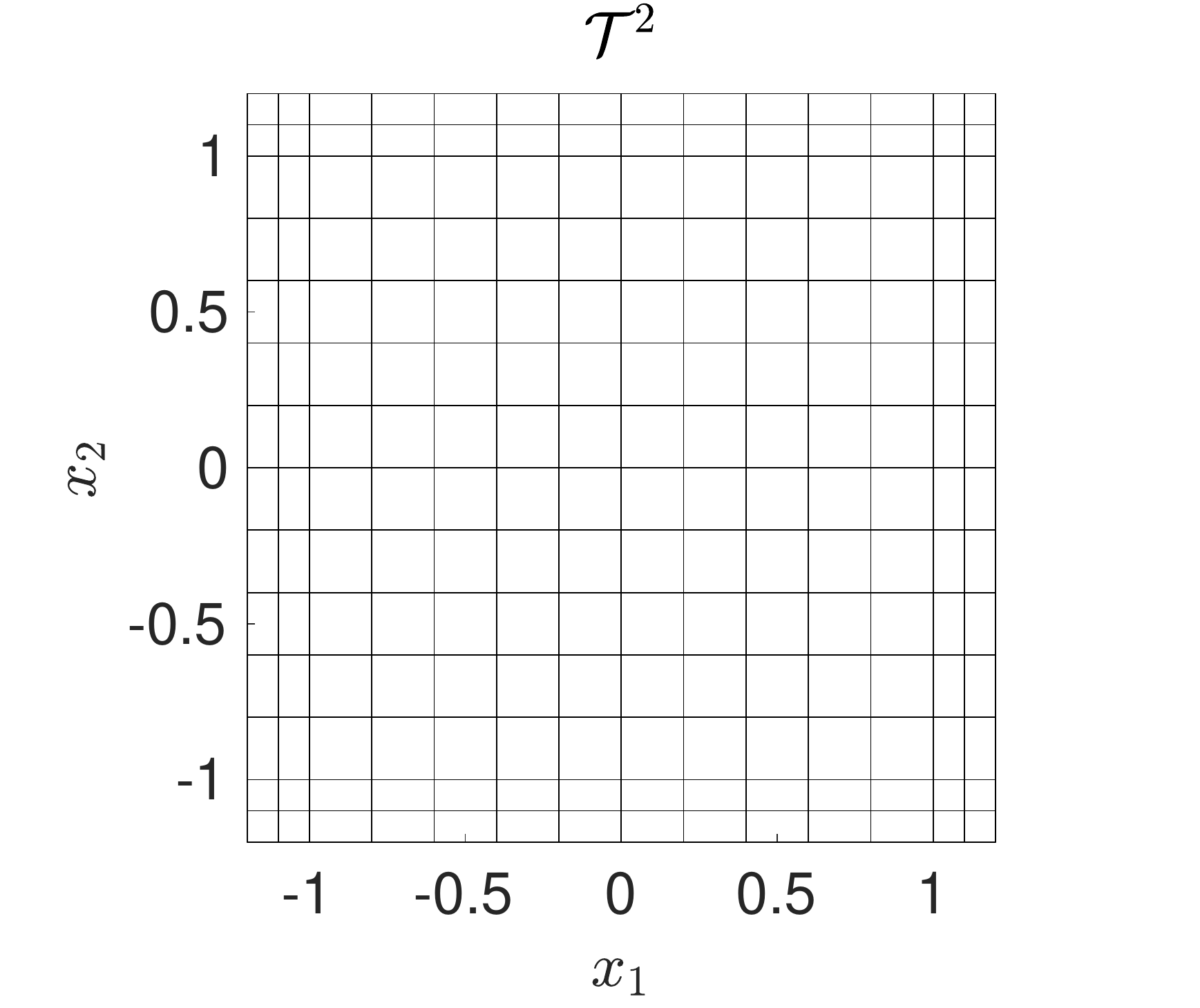}\,
\includegraphics[width=0.3\textwidth]{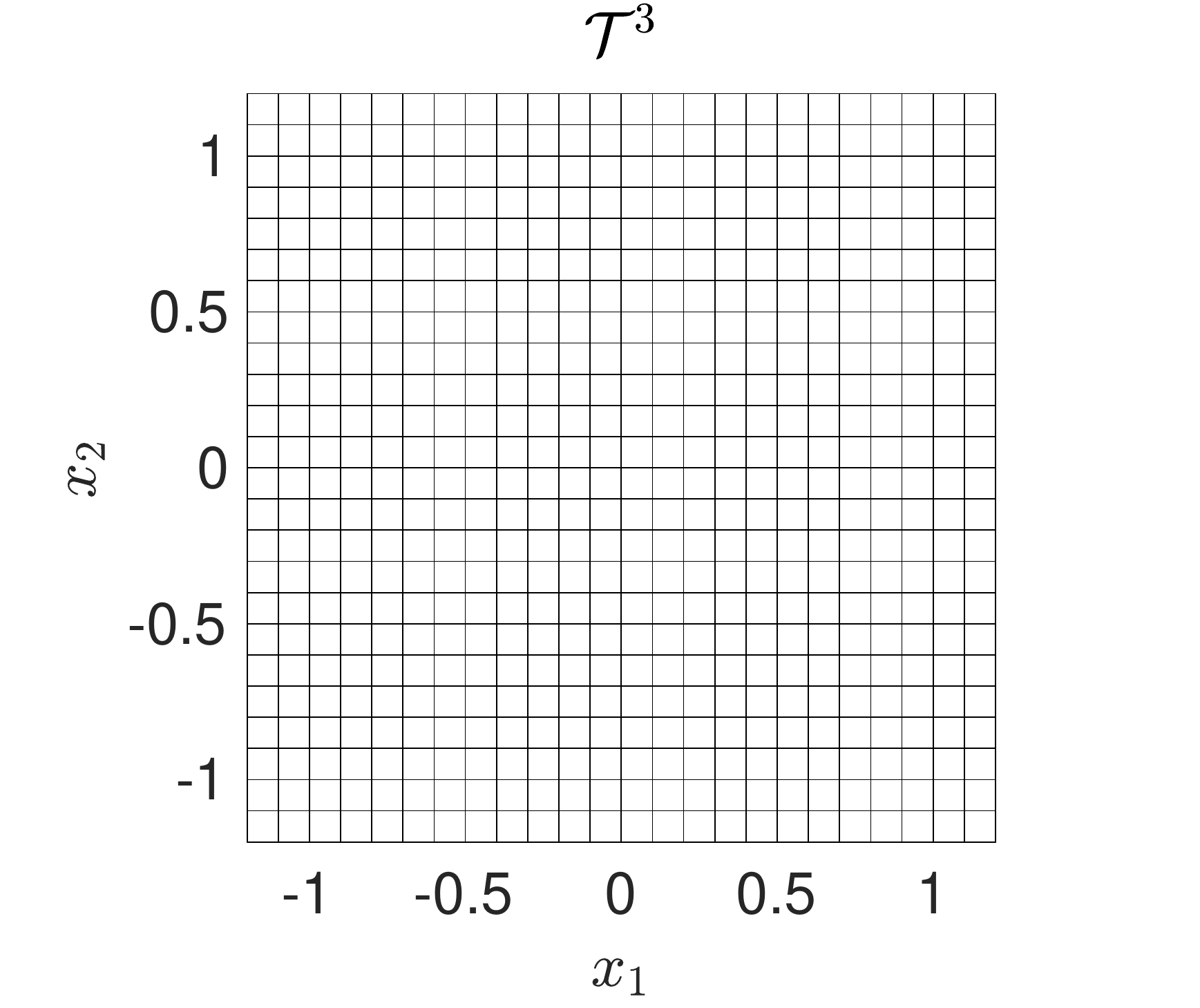}
\caption{Illustration of $\cT^k$, $k=1,2,3$, with $\{h_k\}=\{\frac25,\frac15,\frac1{10}\}$, $\Omega_0=(-1,1)^2$, $W=\frac15$.}
\label{fig:meshes}
\end{figure}

To construct the conforming finite element space $\cU_\cT$ for a given mesh $\cT$, 
we use tensor-product polynomials of degree at most $p$, namely the polynomial 
reference space is $\chU = \text{span}\{ x_1^kx_2^\ell \;|\; k,\ell
\leq p\}$. As degrees of freedom, we use the values at the nodes
$\cX_{\cT}$, where $\cX_{\cT}$ 
consists of the $(p+1)\times(p+1)$ tensor-product Gauss--Lobatto points of each element $\e\in\cT$.
In case $\cT$ has a hanging node, $\cX_{\cT}$ contains the Gauss--Lobatto points corresponding to the coarsest edge adjacent to the hanging node, but not the points corresponding to the finer edges adjacent to the hanging node.

For the discretisation of $\vs$, we use the discontinuous finite element space $\cS_\cT^2$, where $\cS_{\cT}$ is given by
\begin{align*}
\cS_\cT &:= \{ s\in L^2(\Omega) \;|\; s|_{\Omega_0}\equiv0 \text{ and } s\circ\phi_E\in \chU \text{ for all }\e\in\cT:\e\subset\Omega_{ABL}\}.
\end{align*} 
Let $\vs_\e:=\vs|_\e$. The degrees of freedom of $\vs\in\cS_\cT^2$ are given by $\vs_\e(\vx)$ for all $\e\in\cT$ and all $\vx\in\cX_\e$, where $\cX_\e:=\cX_{\{\e\}}$ denotes the set of nodes on $\e$. 

We define the discretisation
$\cL_\cT(u,\vs):(\cU_\cT,\cS_\cT^2)\rightarrow\cU_\cT$ of the spatial
operator $(u,\vs)\mapsto 
-\beta^{-1}\nabla\cdot(\alpha(\nabla u+\vs))$ such that
\begin{align*}
(\beta \cL_\cT(u,\vs),w)_{\cT,ML} &= (\alpha(\nabla u + \vs),\nabla w)_{\cT} &&\forall w\in\cU_\cT.
\end{align*} 
Here, $(\cdot,\cdot)_{\cT}$ denotes the approximation of the $L^2$ inner product $(\cdot,\cdot)$ using the tensor-product $(p+1)$-point Gauss--Lobatto quadrature rule for each element in~$\cT$. Furthermore, $(\cdot,\cdot)_{\cT,ML}$ denotes the approximation of $(\cdot,\cdot)$ using a mass-lumping technique, i.e.
\begin{align*}
(u,w)_{\cT,ML} &= \sum_{\vx\in\cX_{\cT}} u(\vx)w(\vx) \sigma_{\vx,\cT},
\end{align*}
where $\sigma_{\vx,\cT}:=\int_{\Omega} w_{\vx,\cT}(\vy) \,\dd\vy$ and $w_{\vx,\cT}\in\cU_\cT$ denotes the nodal basis function corresponding to node $\vx$. We can give an explicit expression for $\cL_\cT$:
\begin{align*}
\cL_\cT(u,\vs)(\vx) &= \frac{\big(\alpha(\nabla u+\vs),\nabla w_{\vx,\cT}\big)_{\cT}}{\beta(\vx) \sigma_{\vx,\cT}} &&\forall \vx\in\cX_\cT.
\end{align*}

For the projection operators, let $\cT_{j-1}+\cT_j$ denote the mesh constructed from elements of $\cT_{j-1}$ and $\cT_j$ by always selecting the finest elements. We define the projection operators $\Pi_j:\cU_{\cT_{j-1}}\rightarrow\cU_{\cT_j}$ and $\Pi_j^{S}:\cS_{\cT_{j-1}}\rightarrow\cS_{\cT_j}$ in such a way that
\begin{align*}
(\Pi_ju,w)_{\cT_j,ML} &= (u,w)_{\cT_{j-1}+\cT_j} &&\forall w\in\cU_{\cT_j}, \\
(\Pi^S_j s,w)_{\cT_j} &= (s,w)_{\cT_{j-1}+\cT_j} &&\forall w\in\cS_{\cT_j}.
\end{align*}
Furthermore, let $\e\in\cT^k$ with $k\leq K-1$, and let
$\cT_\e=\Call{getSubelements}{\e}$. We define the projection operator
$\Pi_\e:\cU_\e:=\cU_{\{\e\}}\rightarrow\cU_{\cT_\e}$, used for the
refinement criterion in \eqref{eq:projErr}, in such a way that
\begin{align*}
(\Pi_\e u,w)_{\{\e\}} &= (u,w)_{\cT_\e}&&\forall w\in\cU_\e.
\end{align*}
We can give explicit expressions for these projection operators:
\begin{align*}
\Pi_ju(\vx) &= \frac{\big(u,w_{\vx,\cT_j}\big)_{\cT_{j-1}+\cT_j}}{\sigma_{\vx,\cT_j}} &&\forall \vx\in\cX_{\cT_j}, \\
(\Pi_j^Ss)_\e(\vx) &= \frac{\big(s,w_{\vx,\e}\big)_{\cT_{j-1}+\cT_j}}{\sigma_{\vx,\e}} &&\forall \e\in\cT_j \text{ and } \vx\in\cX_{\e}, \\
\Pi_\e u(\vx) &= \frac{(u,w_{\vx,\e})_{\cT_\e}}{\sigma_{\vx,\e}} &&\forall \vx\in\cX_\e.
\end{align*}
Here $\sigma_{\vx,\e}:=\int_\e w_{\vx,\e}(\vy)\,\dd\vy$ and $w_{\vx,\e}\in\cS_\e$ denotes the discontinuous nodal basis function corresponding to element $\e$ and node $\vx$.

For the time discretisation, let $\vs^n:=\vs(\cdot,t^n)$. We define $u^{n+1/2}:=\frac12(u^n+u^{n+1})$ and $\vs^{n+1/2}:=\frac12(\vs^n+\vs^{n+1})$. We also define the discrete second-order time derivative $D_t^2u_j^n$ as in \eqref{eq:Dt2} and we define the discrete time derivatives $D_{2t}u^n$ and $D_t\vs^{n+1/2}$ as 
\begin{align*}
D_{2t}u^n &:= \frac{u^{n+1}-u^{n-1}}{2\Delta t}, \quad
D_{t}\vs^{n+1/2}:= \frac{\vs^{n+1}-\vs^{n}}{\Delta t}.
\end{align*}

The fully discrete finite element formulation can then be stated as
follows: for $j=1,2,\dots$, find $u_j^n\in\cU_{\cT_j}$ for
$n:n_{j-1}-1\leq n\leq n_j$ and $\vs_j^n\in\cS_{\cT_j}$ for
$n:n_{j-1}\leq n \leq n_j$ such that
\begin{subequations}
\label{eq:FEM3}
\begin{align}
D_t^2 u_{j}^n + (\zeta_1+\zeta_2)D_{2t}u_j^n + \zeta_1\zeta_2u_j^n + \cL_{\cT_j}(u_{j}^n,\vs_j^n) &= f_{\cT_j}(\cdot,t^n)  && \text{at all }\vx\in\cX_{\cT_j}, \label{eq:FEM3a} \\
 D_{t} \vs_{j,\e}^{n+1/2} + Z_1\vs_{j,\e}^{n+1/2} + Z_2\nabla u_j^{n+1/2}|_\e &= 0 && \begin{matrix*}[l] \text{at all }\vx\in\cX_{\e}, \\ \forall \e\in\cT_j, \end{matrix*}  \label{eq:FEM3b} 
\end{align}
\end{subequations}
for $n:n_{j-1}+1\leq n \leq n_j$,
and 
\begin{align*}
u_{j}^{n} &= \Pi_{j} u_{j-1}^{n} &&\text{for }n=n_{j-1}-1 \text{ and }n=n_{j-1}, \\
\vs_{j}^{n} &= \Pi^S_{j} \vs_{j-1}^{n} &&\text{for }n=n_{j-1},
\end{align*}
with $u_0^0\equiv 0$, $u_0^{-1}\equiv 0$, and $\vs^0\equiv\vzero$.

We can rewrite \eqref{eq:FEM3} as
\begin{subequations}
\label{eq:FEM32}
\begin{align}
u^{n+1}_j &= \frac{\tz_1u_j^{n-1} + \tz_2u_j^n + \Delta t^2(-\cL_{\cT_j}(u^n_j,\vs^n_j) + f_{\cT_j}(\cdot,t^n)) }{\tz_3} &&\text{at all }\vx\in\cX_{\cT_j}   \\
\vs^{n+1}_{j,\e} &= \tZ_3^{-1} \big( \tZ_1\vs^n_{j,\e} + \tZ_2\nabla u_j^{n+1/2}|_\e \big) && \begin{matrix*}[l] \text{at all }\vx\in\cX_{\e}, \\ \forall \e\in\cT_j, \end{matrix*} 
\end{align}
\end{subequations}
where
\begin{align*}
\tz_1&:= -1+\frac12\Delta t(\zeta_1+\zeta_2),  & \tZ_1 &:=  I-\frac12\Delta tZ_1, \\
\tz_2&:= 2-\Delta t^2\zeta_1\zeta_2,  & \tZ_2 &:=  -\Delta tZ_2, \\
\tz_3&:= 1+\frac12\Delta t(\zeta_1+\zeta_2),& \tZ_3 &:=  I+\frac12\Delta tZ_1,
\end{align*}
with $I\in\mR^{2\times 2}$ the identity matrix. 

For the discretisation of the source term $f$, recall that $f=-\partial_t^2 u_I + \beta^{-1}\nabla\cdot(\alpha\nabla u_I)$. One can check that $-\partial_t^2u_I + \beta_0^{-1}\nabla\cdot(\alpha_0\nabla u_I)\equiv 0$ and therefore we have $-\beta_0\beta^{-1}\partial_t^2u_I+\beta^{-1}\nabla\cdot(\alpha_0\nabla u_I)\equiv 0$. We can therefore write $f=-(\beta-\beta_0)\beta^{-1}\partial_t^2u_I + \beta^{-1}\nabla\cdot((\alpha-\alpha_0)\nabla u_I)$. We can discretise the time- and spatial derivatives in a similar way as before. The discrete source term $f_\cT(\cdot,t^n)\in\cU_\cT$ can then be given by
\begin{align*}
f_{\cT}(\vx,t^n) &:= -\frac{\beta(\vx)-\beta_0}{\beta(\vx)}D_t^2u_I^n(\vx) - \frac{\big((\alpha-\alpha_0)\nabla u_I^n,\nabla w_{\vx,\cT}\big)_{\cT}}{\beta(\vx) \sigma_{\vx,\cT}}  &&\forall\vx\in\cX_{\cT},
\end{align*}
where $u^n_I:=u_I(\cdot,t^n)$. Note that, due to this discretisation, the discrete source term is still zero in the exterior domain $\Omega_{ex}$ and for all $t^n>t_f$.

To discretise the characteristic function $\chi_\e$ for a square element $\e=(x_{1,\e},x_{1,\e}+h_\e)\times(x_{2,\e},x_{2,\e}+h_\e)$, we define
\begin{align}
\label{eq:chiE}
\chi_\e(\vx)=\begin{cases}
1, & \begin{matrix*}[l] x_1\in[x_{1,\e},x_{1,\e}+h_\e)\cup (\{x_{1,\e}+h_\e\}\cap\{L_1+W\}) \text{ and } \\ x_2 \in[x_{2,\e},x_{2,\e}+h_\e)\cup (\{x_{2,\e}+h_\e\}\cap\{L_2+W\}),\end{matrix*} \\
0, & \text{otherwise}.
\end{cases}
\end{align}
Then the partition of unity property \eqref{eq:PU} is valid
for every $\vx\in\Omega$ (not just a.e. $\vx\in\Omega$) and, in particular, for all nodes $\vx\in\cX_{\cT^K}$.

\subsubsection{Algorithm for solving the Helmholtz equation}
To solve the Helmholtz equation given in \eqref{eq:Helmholtz2}, we
use Algorithm \ref{alg:Helmholtz} with a few small modifications given below.
These modifications take the absorbing boundary layer into account and ensure that all the steps are fully computable.

\begin{itemize}
  \item At the start, we also initialise the auxiliary variables $\vs_h\gets \vzero$.
  \item After we compute
    $u_h\gets\Call{project}{u_h,\cT_h,\cT_h^{new}}$, we also compute
    the auxiliary variable $\vs_h\gets\Call{projectS}{\vs_h,\cT_h,\cT_h^{new}}$, where the function $\vs_j^n=\Call{projectS}{\vs_{j-1}^n,\cT_{j-1},\cT_j}$ computes $\vs_j^n=\Pi_j^S\vs_{j-1}^n$.
  \item Instead of computing $u_h^{new}\gets
    \Call{doTimeStep}{u_h,u_h^{old},\cT_h,t^n}$, we now compute the
    pair
    $(u_h^{new},\vs_h)\gets\Call{doTimeStep}{u_h,u_h^{old},\vs_h,\cT_h,t^n}$, where we use the modified function
    $(u_j^{n+1},\vs_j^{n+1})=\Call{doTimeStep}{u_j^n,u_j^{n-1},\vs_j^n,
      \cT_j,t^n}$ that computes $u_j^{n+1}$ and $\vs_j^{n+1}$ with the formulae in \eqref{eq:FEM32}.
  \item For the stopping criterion given in \eqref{eq:stop}, we do not take the supremum over all $\vx\in\Omega$, but instead we compute the supremum over all nodes $\vx\in\cX_{\cT_j}$. Similarly, for the refinement criterion given in \eqref{eq:projErr}, we do not take the supremum over all $\vx\in\e$, but only over all nodes $\vx\in\cX_{\cT_\e}$, where $\cT_\e= \Call{getSubelements}{\e}$.
  \item For the function $U_h=\Call{updateFT}{U_h,\Delta U_{j-1}^n,\cT_{j-1},\cT_j}$, we now compute $U_h(\vx)\gets U_h(\vx)+\chi_\e(\vx)\Delta U_{j-1,\e}^n(\vx)$ for all $\vx\in\cX_{\cT^K}\cap\overline\e$ and all $\e\in\cT_{j-1}\setminus\cT_j$, with $\chi_\e$ defined as in \eqref{eq:chiE}.
\end{itemize}

\subsection{Numerical example in 1D}
\label{sec:num1D}
As a first numerical example, we consider the 1D domain $\Omega=(-1,1)$, with spatial parameters
\begin{align*}
\alpha(x) &= \begin{cases}
1 + 3(1-2x)^2(1+2x)^2, & x\in(-\frac12,\frac12), \\
1, & \text{otherwise},
\end{cases} \\
 \beta(x) &= 1,
\end{align*}
and an incoming plane wave $U_I(x) = e^{\im \omega x/c_0}$, with $c_0=1$. An illustration of $\alpha$ is given in Figure \ref{fig:alpha1D}.

\begin{figure}[h]
\centering
\includegraphics[width=0.45\textwidth]{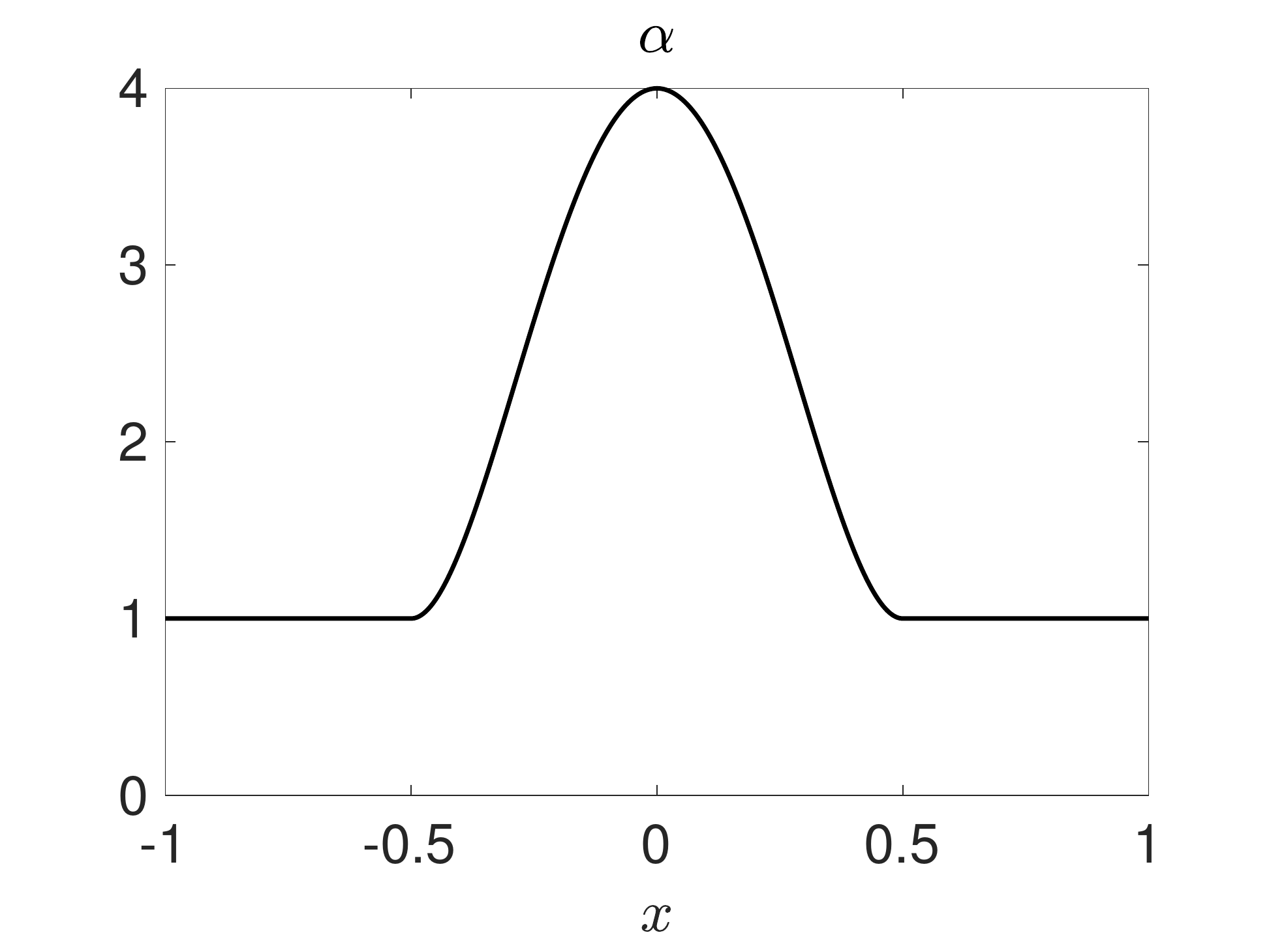}\,
\includegraphics[width=0.45\textwidth]{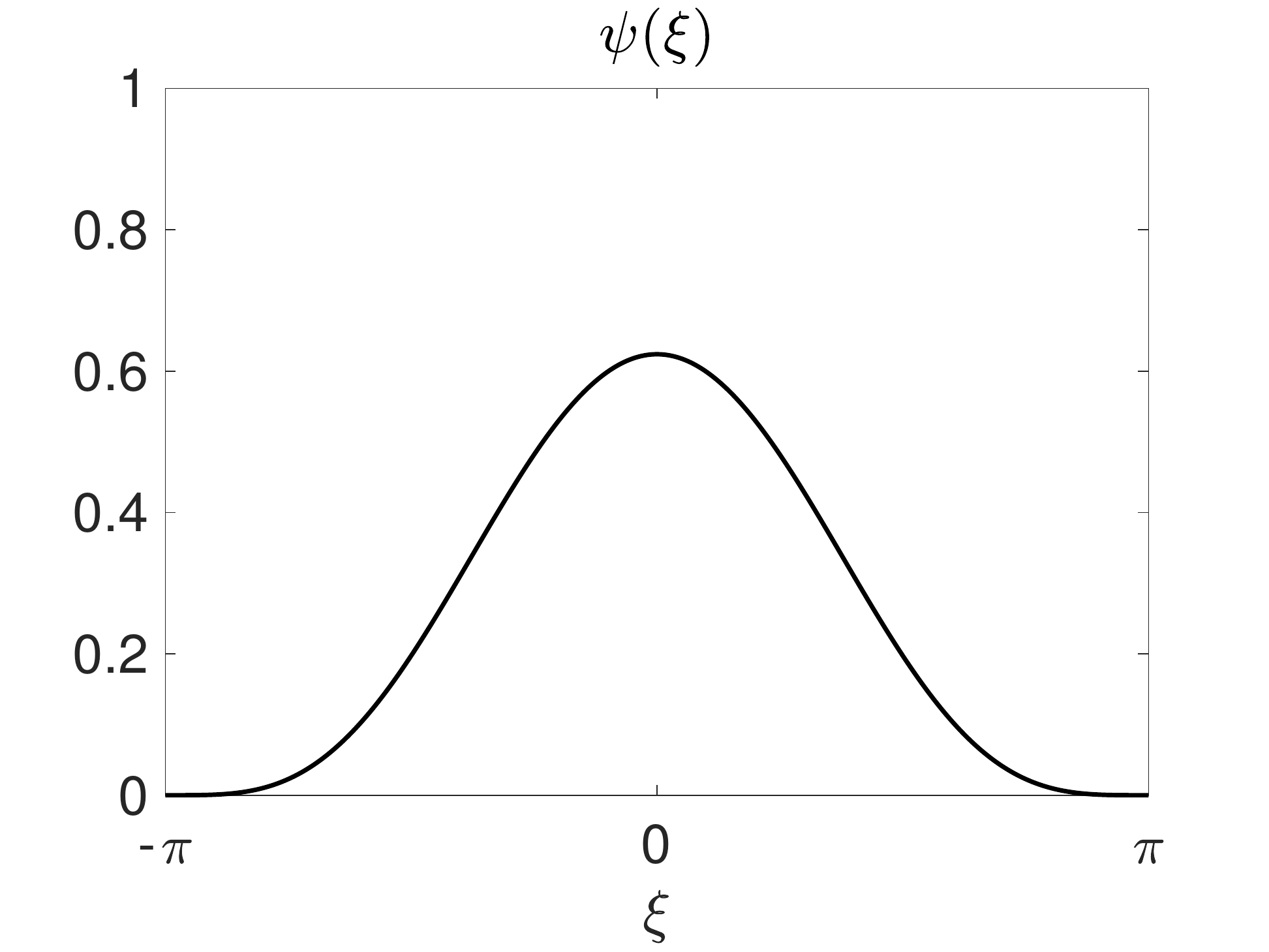}
\caption{Illustration of parameter $\alpha(x)$ (left) and wavelet $\psi(\xi)$ (right).}
\label{fig:alpha1D}
\end{figure}

For the numerical approximation, we consider an incoming wavelet of the form $u_I(x,t) = \omega\psi(\omega(t-x/c_0))$, with
\begin{align*}
\psi(\xi) &= \begin{cases}
\displaystyle\frac{(\xi-\pi)^4(\xi+\pi)^4}{3840\pi(21-2\pi^2)}, & \xi\in(-\pi,\pi), \\
0, & \text{otherwise}.
\end{cases}
\end{align*}
An illustration of $\psi$ is given in Figure \ref{fig:alpha1D}. 

At the boundary, we apply a perfectly matched layer of width $W=c_0\pi/\omega$, i.e. of width half a wave length. The equations for the absorbing boundary layer in 1D are the same as those in 2D given in \eqref{eq:PML}, but with $\zeta_2\equiv 0$ and with a scalar field $s(x)$ instead of $\vs(\vx)$. We choose the damping parameter $\zeta_1(x)=\zeta(x)$ as in \cite{cohen2002}, where
\begin{align}
\zeta(x) &= \begin{cases}
|\log(R)|\frac{3}{2W} \left(\frac{W-(1-|x|)}{W} \right)^2 , & |x| > 1 \\
0, &\text{otherwise},
\end{cases}
\end{align}
with $R=10^{-10}$ the expected artificial reflection. 

For the time step size, we choose $\Delta t=T_{up}/m$, with $m$ the smallest positive integer such that
\begin{align*}
\Delta t = \frac{T_{up}}{m} \leq c_{CFL} \frac{2}{\sqrt{\lambda^*_{\max}}},
\end{align*}
where the CFL number is chosen as $c_{CFL}=0.9$, and where $\lambda_{\max}^*$ is an upper bound of the largest eigenvalue $\lambda_{\max}$ of the discrete spatial differential operator $\beta^{-1}\nabla\cdot\alpha\nabla$, given by
\begin{align*}
\lambda^*_{\max} := \frac{1}{h_{K}^2} \frac{\alpha_{\max}}{\beta_{\min}} \sup_{u\in\hU\setminus\{0\}} \frac{(\nabla u,\nabla u)_{\{\he\}}}{(u,u)_{\{\he\}}},
\end{align*}
where $\alpha_{\max}:=\sup_{x\in\Omega}\alpha(x)$, and $\beta_{\min}:=\inf_{x\in\Omega} \beta(x)$. A smaller value of $T_{up}$ results in fewer neighbouring elements being marked for refinement, and therefore in fewer degrees of freedom on average. However, it also means that the mesh needs to be updated more frequently. In other words, choosing $T_{up}$ very small might slow down the method due to computational overhead, while choosing $T_{up}$ very large might render the method less efficient due to many additional neighbouring elements being refined. In the numerical examples, we choose $T_{up}$ as half a time-period, i.e. $T_{up}=\pi\omega^{-1}$.

For the spatial discretisation, we use quadratic elements (so degree $p=2$). We compute for the time interval $(t_0,t_{stop})=(t_0,T_{j_{stop}})$, where $t_0=-0.5-\pi\omega^{-1}$ and where $j_{stop}$ is determined using the stopping criterion in \eqref{eq:stop} with $\epsilon_0=\omega/100$. For the mesh refinement criterion in \eqref{eq:projErr}, we use $\eta_0=\omega/100$. An overview of the $L^2(\Omega_0)$ error $err_{2}:= \|U_S-U_h(\cdot,T_{j_{stop}})\|_{\Omega_0}$ and average number of degrees of freedom $\overline{n}_{DOF}:=\frac{1}{j_{stop}}\sum_{j=1}^{j_{stop}} |\cX(\cT_j)|$ is given in Table \ref{tab:err1a} for the adapted finite element method. The results are compared with 
the classical finite element method using a uniform mesh of width
$h=h_K$ and using the same polynomial degree, time step size, and
stopping time as for the adapted finite element method. From this
table, we can see that the error of the adapted finite element method
and classical finite element method behave very similarly, whereas the average number of degrees of freedom grows at a significantly slower rate for the adapted finite element method as the frequency $\omega$ increases. In particular, Tables \ref{tab:err1a} and \ref{tab:rate} illustrate that the average number of degrees of freedom is almost independent of $\omega$ for the adaptive finite element method in 1D.

\begin{table}[h]
\centering
{\tabulinesep=0.5mm
\begin{tabu}{|c| c c c c r r| r r|} \hline
& \multicolumn{6}{c|}{AFEM} & \multicolumn{2}{c|}{FEM} \\ 
$\omega$ & $\{h_k\}$ & $T_{up}$ & $m$ & $j_{stop}$ & $err_{2}$ & $\overline{n}_{DOF}$ & $err_{2}$ & $\overline{n}_{DOF}$ \\ \hline
$10\pi$ & $\{\frac{1}{5},\frac{1}{50}\}$ & $\frac{1}{10}$ & 28 & 20 & 1.40e-02 & 1.14e+02 & 1.00e-02 & 2.21e+02  \\
$20\pi$ & $\{\frac{1}{5},\frac{1}{10},\frac{1}{100}\}$ & $\frac{1}{20}$ & 28 & 36 & 1.67e-02 & 1.43e+02 & 1.71e-02 & 4.21e+02 \\
$40\pi$ & $\{\frac{1}{5},\frac{1}{20},\frac{1}{200}\}$ & $\frac{1}{40}$ & 28 & 63 & 3.92e-02 & 1.78e+02 & 3.93e-02 & 8.21e+02 \\ 
$80\pi$ & $\{\frac{1}{5},\frac{1}{40},\frac{1}{400}\}$ & $\frac{1}{80}$ & 28 & 123 & 6.49e-02 & 2.12e+02 & 5.92e-02 & 1.62e+03 \\ \hline
\end{tabu}
}
\caption{Estimated $L^2(\Omega_0)$ error and average number of degrees of freedom for the quadratic adapted (AFEM) and classical (FEM) finite element approximation to the 1D Helmholtz problem for different angular frequencies $\omega$. To estimate the error, we take the numerical approximation on a uniform mesh of width $h_K/2$ as reference solution.}
\label{tab:err1a}
\end{table}

To compute the errors in Table \ref{tab:err1a}, we use the discrete solution obtained on a uniform mesh of width $h_{K/2}$ as reference solution. However, this error does not take into account the error produced by the absorbing boundary layer or the truncation of the wave field at time $t_{stop}$. To measure these errors, we compute the following numerical approximations and reference solutions.
\begin{itemize}
  \item $U_h$: the adapted finite element approximation considered in Table \ref{tab:err1a}.
  \item $U^1$: the reference solution used in Table \ref{tab:err1a}.  
  \item $U^2$: similar to $U^1$, but using the exact absorbing boundary condition $\partial_tu+c_0\partial_xu = 0$ on $\partial\Omega_0$ instead of an absorbing boundary layer.
  \item $U^3$: similar to $U^2$, but using a time interval $(t_0,t_0+100)$ instead of $(t_0,t_{stop})$ ($t_{stop}-t_0<2.5$ for all cases in Table \ref{tab:err1a}).
\end{itemize}
The difference $U_h-U^1$ indicates the error due to the spatial and time discretisation, $U^1-U^2$ indicates the error due to the absorbing boundary layer, and $U^2-U^3$ indicates the error due to the truncation in time. An overview of these errors is given in Table \ref{tab:err1b}. From this table, we can see that the error is dominated by the discretisation error, whereas the errors due to the absorbing boundary layer and truncation in time are negligible. We will use this as a motivation to also estimate the error by $\|U_h-U^1\|_{\Omega_0}$ in the 2D case. 

\begin{table}[h]
\centering
{\tabulinesep=0.5mm
\begin{tabu}{|c| r r r|} \hline
$\omega$ & $\|U_h-U^1\|_{\Omega_0}$ & $\|U^1-U^2\|_{\Omega_0}$ & $\|U^2-U^3\|_{\Omega_0}$  \\ \hline
$10\pi$ & 1.40e-02 & 2.42e-04 & 4.50e-03 \\
$20\pi$ & 1.67e-02 & 2.00e-04 & 3.45e-03 \\
$40\pi$ & 3.92e-02 & 1.38e-04 & 3.94e-03 \\ 
$80\pi$ & 6.49e-02 & 1.21e-04 & 2.12e-03 \\ \hline
\end{tabu}
}
\caption{Estimated $L^2(\Omega_0)$ error due to the spatial and time discretisation $\|U_h-U^1\|_{\Omega_0}$, due to the absorbing boundary layer $\|U^1-U^2\|_{\Omega_0}$, and due to the truncation in time $\|U^2-U^3\|_{\Omega_0}$ for the 1D test cases.}
\label{tab:err1b}
\end{table}

An illustration of $u_h$ for the case $\omega=40\pi$ and an illustration of the corresponding space-time mesh as described in Section \ref{sec:FT} are given in Figure \ref{fig:spaceTimeMesh}. 

\begin{figure}
\begin{center}
\includegraphics[width=0.45\textwidth]{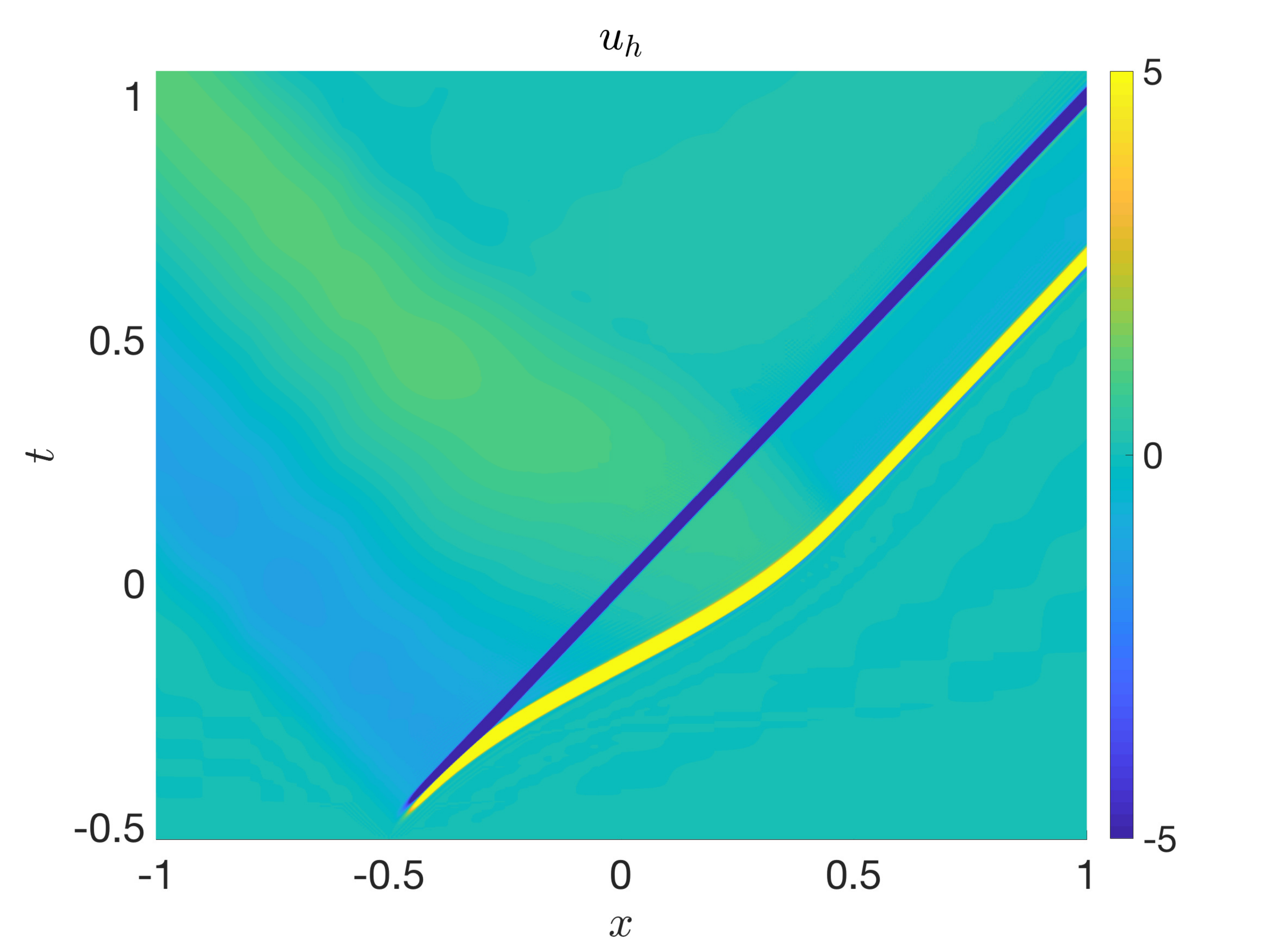}
\includegraphics[width=0.45\textwidth]{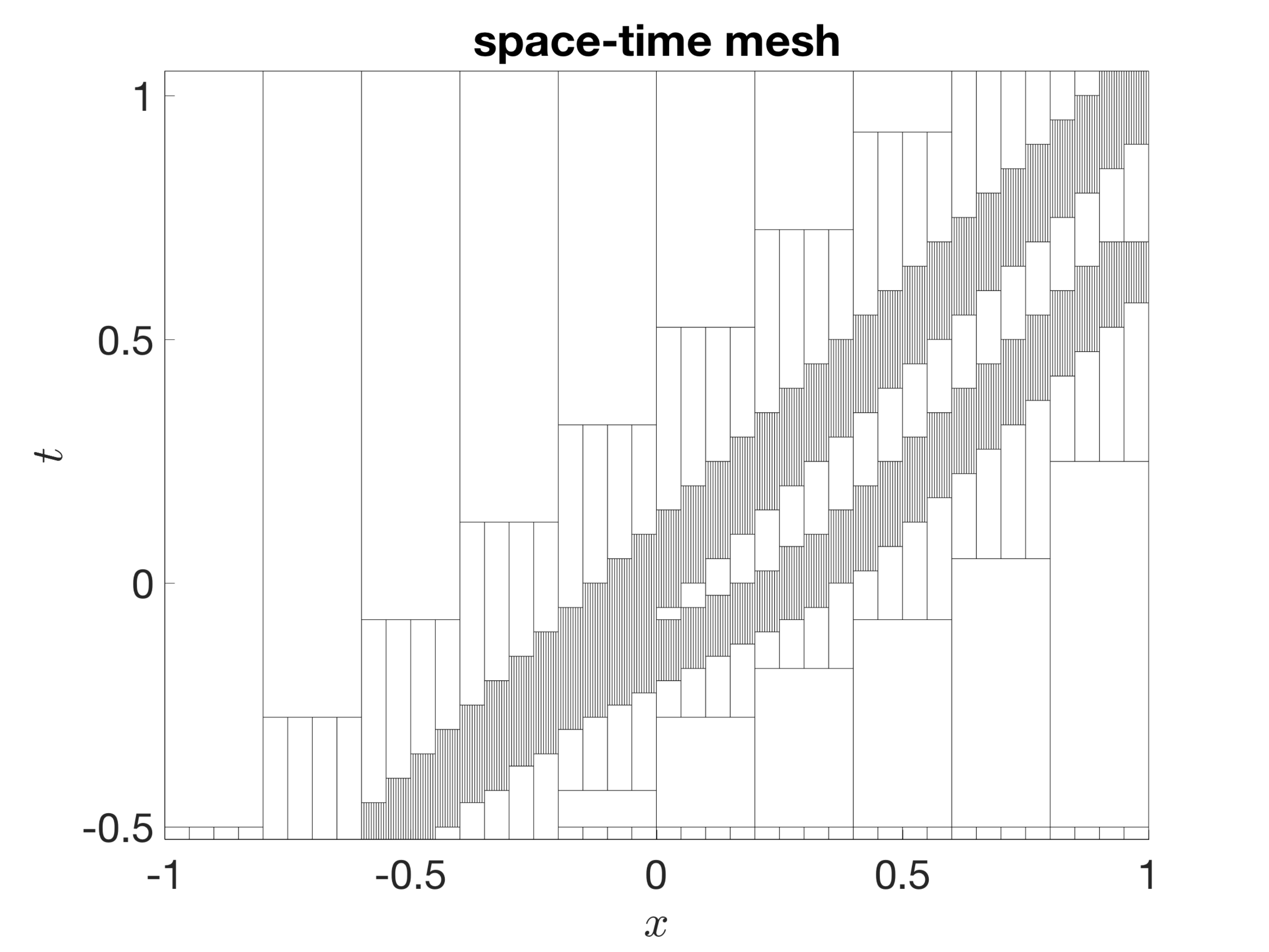}
\end{center}
\caption{Illustration of $u_h$ and the space-time mesh described in Section \ref{sec:FT} for the 1D test case with $\omega=40\pi$. In the actual algorithm, we never explicitly compute the space-time mesh.}
\label{fig:spaceTimeMesh}
\end{figure}

\subsection{Numerical example in 2D: incoming plane wave}
\label{sec:num2Dplane}
For the first 2D example, we consider a domain $\Omega_0=(-1,1)^2$, with spatial parameters
\begin{align*}
\alpha(x_1,x_2) &= \begin{cases}
1 + 3(1-2\sqrt{x_1^2+x_2^2})^2(1+2\sqrt{x_1^2+x_2^2})^2, & \sqrt{x_1^2+x_2^2} \leq \frac12, \\
1, & \text{otherwise},
\end{cases} \\
\beta(x_1,x_2) &= 1,
\end{align*}
and an incoming plane wave $U_I(x_1,x_2) = e^{\im \omega x_1/c_0}$, with $c_0=1$. An illustration of $\alpha$ is given in Figure \ref{fig:alpha2D}.

\begin{figure}[h]
\centering
\includegraphics[width=0.45\textwidth]{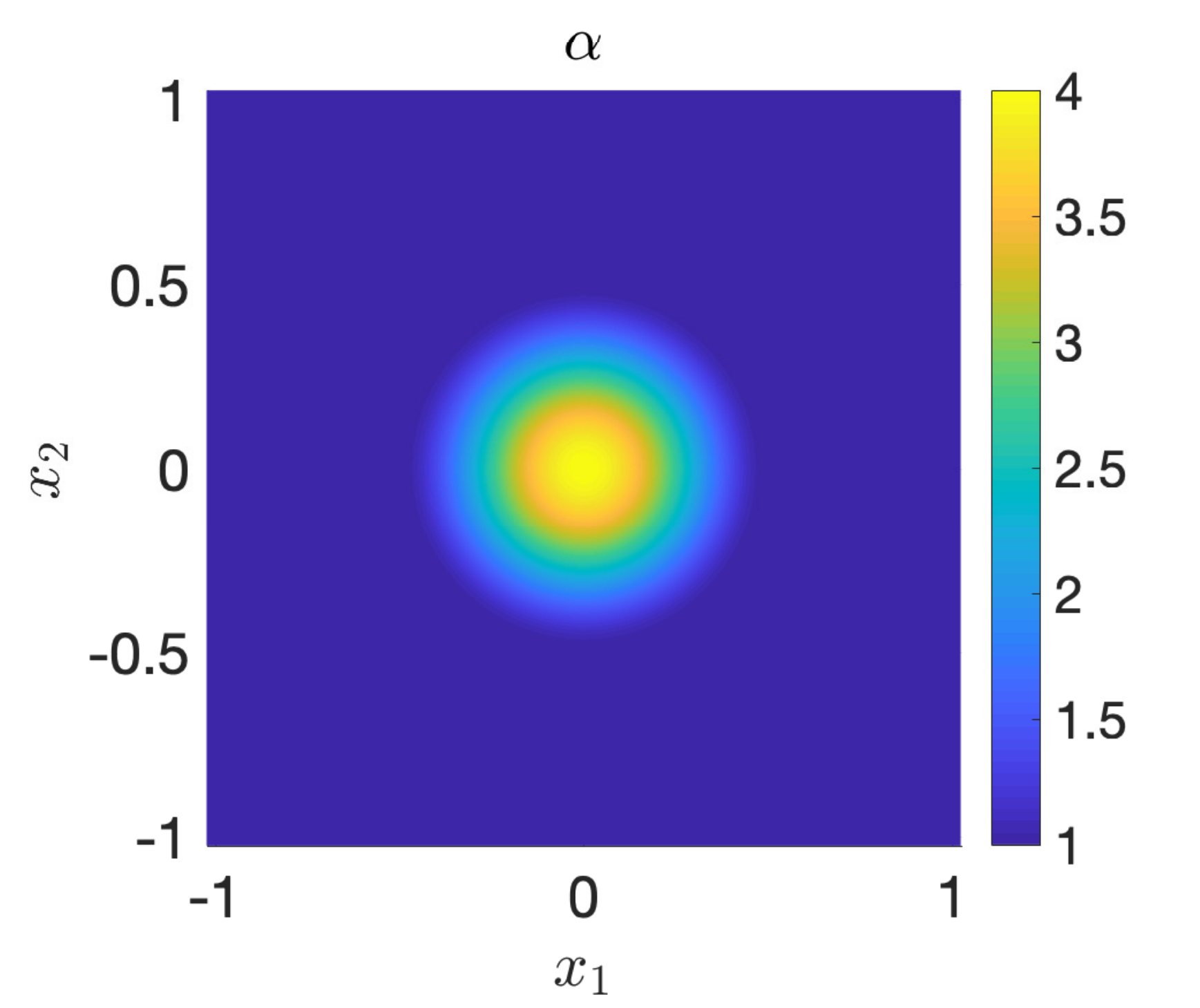}
\caption{}
\label{fig:alpha2D}
\end{figure}

For the numerical approximation, we consider an incoming wavelet of the form $u_I(x_1,x_2,t) = \omega\psi(\omega(t-x_1/c_0))$, with $\psi$ as in the 1D example. At the boundary, we apply a perfectly matched layer of width $W=c_0\pi/\omega$, i.e. of width half a wave length. We choose the damping parameters $\zeta_1(x_1)=\zeta(x_1)$ and $\zeta_2(x_2)=\zeta(x_2)$, with $\zeta(x)$ as in the 1D example.

We compute for the time interval $(t_0, t_{stop}) = (t_0,
T_{j_{stop}})$, where $t_0 = -0.5-\pi\omega^{-1}$. For the spatial
discretisation, we use biquadratic elements. The time step size
$\Delta t$ and the parameters $\eta_0$ and $\epsilon_0$ are chosen in
the same way as in the 1D case. We compute the $L^2(\Omega_0)$ error
and compare the results with a classical finite element method with a
uniform mesh in the same way as we did for the 1D case. The results
are presented in Table \ref{tab:err2a}. From this table, we can see
that, just as in the 1D case, the errors of the adaptive finite
element method and classical finite element method behave similarly, whereas the average number of degrees of freedom grows at a significantly slower rate for the adapted finite element method as $\omega$ increases. In particular, Table \ref{tab:rate} illustrates that for the adaptive finite element method, the average number of degrees of freedom grows almost linearly with $\omega$ instead of as $\omega^2$.

\begin{table}[h]
\centering
{\tabulinesep=0.5mm
\begin{tabu}{|c| c c c c r r| r r|} \hline
& \multicolumn{6}{c|}{AFEM} & \multicolumn{2}{c|}{FEM} \\ 
$\omega$ & $\{h_k\}$ & $T_{up}$ & $m$ & $j_{stop}$ & $err_{2}$ & $\overline{n}_{DOF}$ & $err_{2}$ & $\overline{n}_{DOF}$ \\ \hline
$10\pi$ & $\{\frac{1}{5},\frac{1}{50}\}$ & $\frac{1}{10}$ & $39$ & $22$ & 7.47e-03 & 1.93e+04 & 4.97e-03 & 4.88e+04 \\
$20\pi$ & $\{\frac{1}{5},\frac{1}{10},\frac{1}{100}\}$ & $\frac{1}{20}$ & $39$ & $39$ & 8.71e-03 & 4.15e+04 & 8.43e-03 & 1.77e+05 \\
$40\pi$ & $\{\frac{1}{5},\frac{1}{20},\frac{1}{200}\}$ & $\frac{1}{40}$ & $39$ & $73$ & 1.77e-02 & 9.33e+04 & 1.74e-02 & 6.74e+05 \\ 
$80\pi$ & $\{\frac{1}{5},\frac{1}{40},\frac{1}{400}\}$ & $\frac{1}{80}$ & $39$ & $139$ & 3.48e-02 & 2.09e+05 & 3.44e-02 & 2.63e+06 \\ 
\hline
\end{tabu}
}
\caption{Estimated $L^2(\Omega_0)$ error and average number of degrees of freedom for the biquadratic adapted (AFEM) and classical (FEM) finite element approximation to the 2D plane wave Helmholtz problem for different angular frequencies $\omega$. To estimate the error, we take the numerical approximation on a uniform mesh of width $h_K/2$ as the exact solution. }
\label{tab:err2a}
\end{table}

\begin{table}[h]
\centering
{\tabulinesep=0.5mm
\begin{tabu}{|c| r r r| r r r|} \hline
&\multicolumn{3}{c|}{1D} & \multicolumn{3}{c|}{2D} \\
$\omega$ & $\overline{n}_{DOF}$ & ratio & rate & $\overline{n}_{DOF}$ & ratio & rate \\ \hline
$10\pi$ & 1.14e+02 &  &  & 1.93e+04 &  &  \\
$20\pi$ & 1.43e+02 & 1.25 & 0.33 & 4.15e+04 & 2.15 & 1.10 \\
$40\pi$ & 1.78e+02 & 1.24 & 0.31 & 9.33e+04 & 2.25 & 1.17 \\ 
$80\pi$ & 2.12e+02 & 1.19 & 0.25 & 2.09e+05 & 2.24 & 1.16 \\ \hline
\end{tabu}
}
\caption{Estimated growth rate of the average number of degrees of freedom $\overline{n}_{DOF}$ with respect to the frequency $\omega$ for the adaptive finite element method. The results correspond to the incoming plane wave problem in 1D and 2D.}
\label{tab:rate}
\end{table}

Snapshots of the time-dependent wave field for different frequencies are shown in Figure \ref{fig:uh}. This figure shows that the wavefront gets sharper as $\omega$ increases, while away from the wavefront, e.g. on the left of $x_1=0$, the wave field is similar for different frequencies. An illustration of the total time-harmonic field and the error for the case $\omega=40\pi$ is given in Figure \ref{fig:Uh}.

\begin{figure}[h]
\begin{center}
\includegraphics[width=0.37\textwidth]{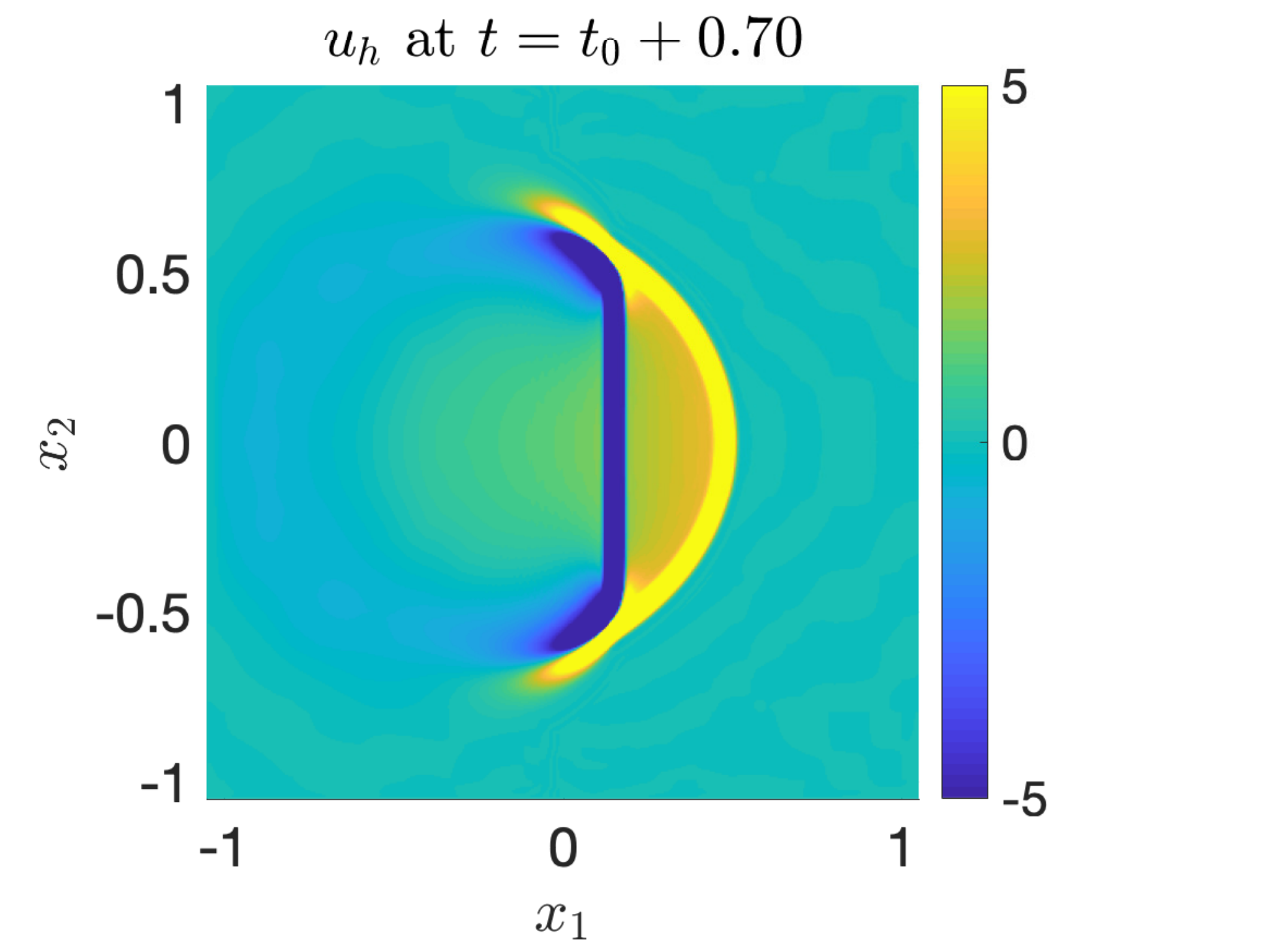}\hspace{-1cm}
\includegraphics[width=0.37\textwidth]{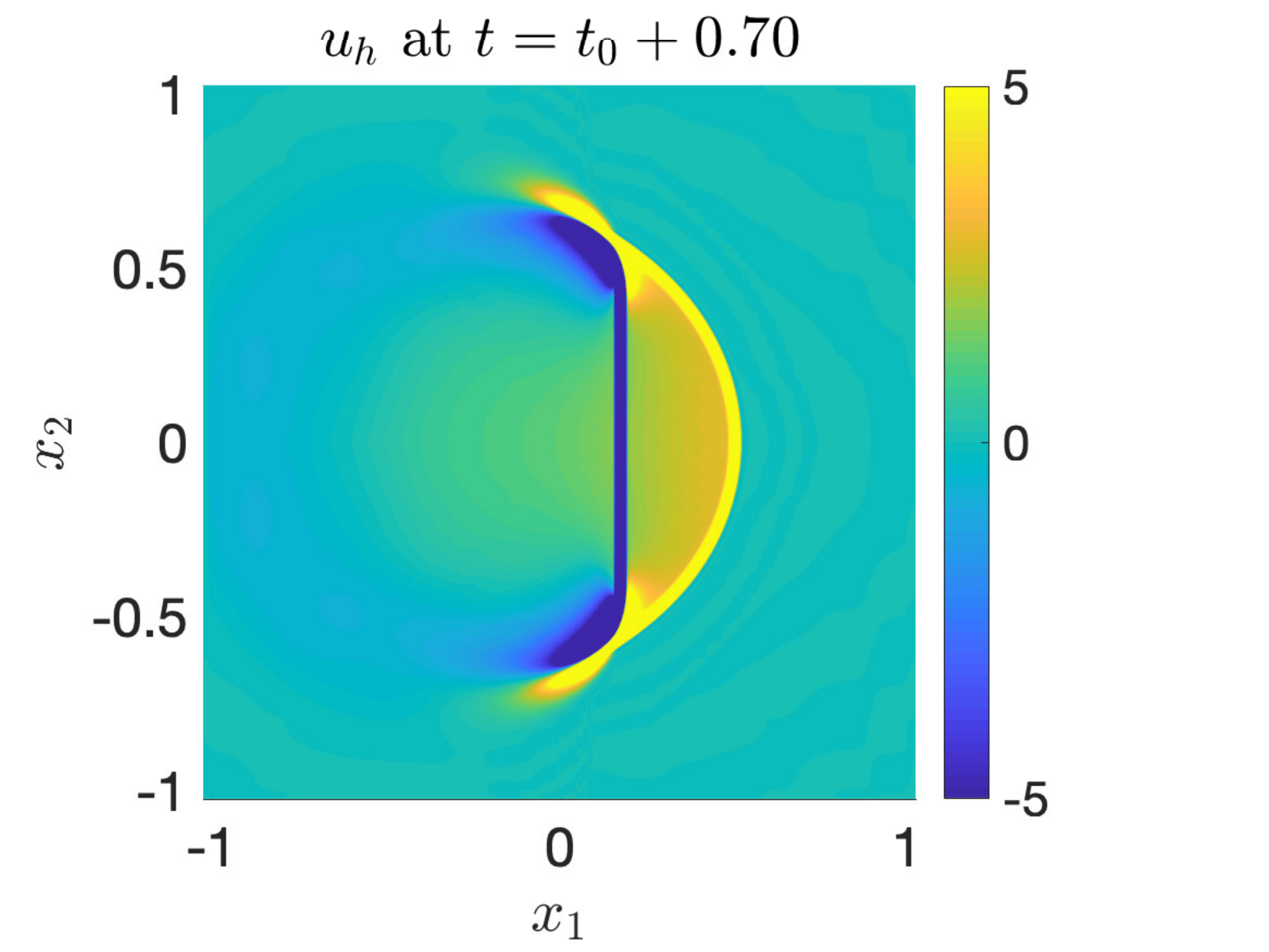}\hspace{-1cm}
\includegraphics[width=0.37\textwidth]{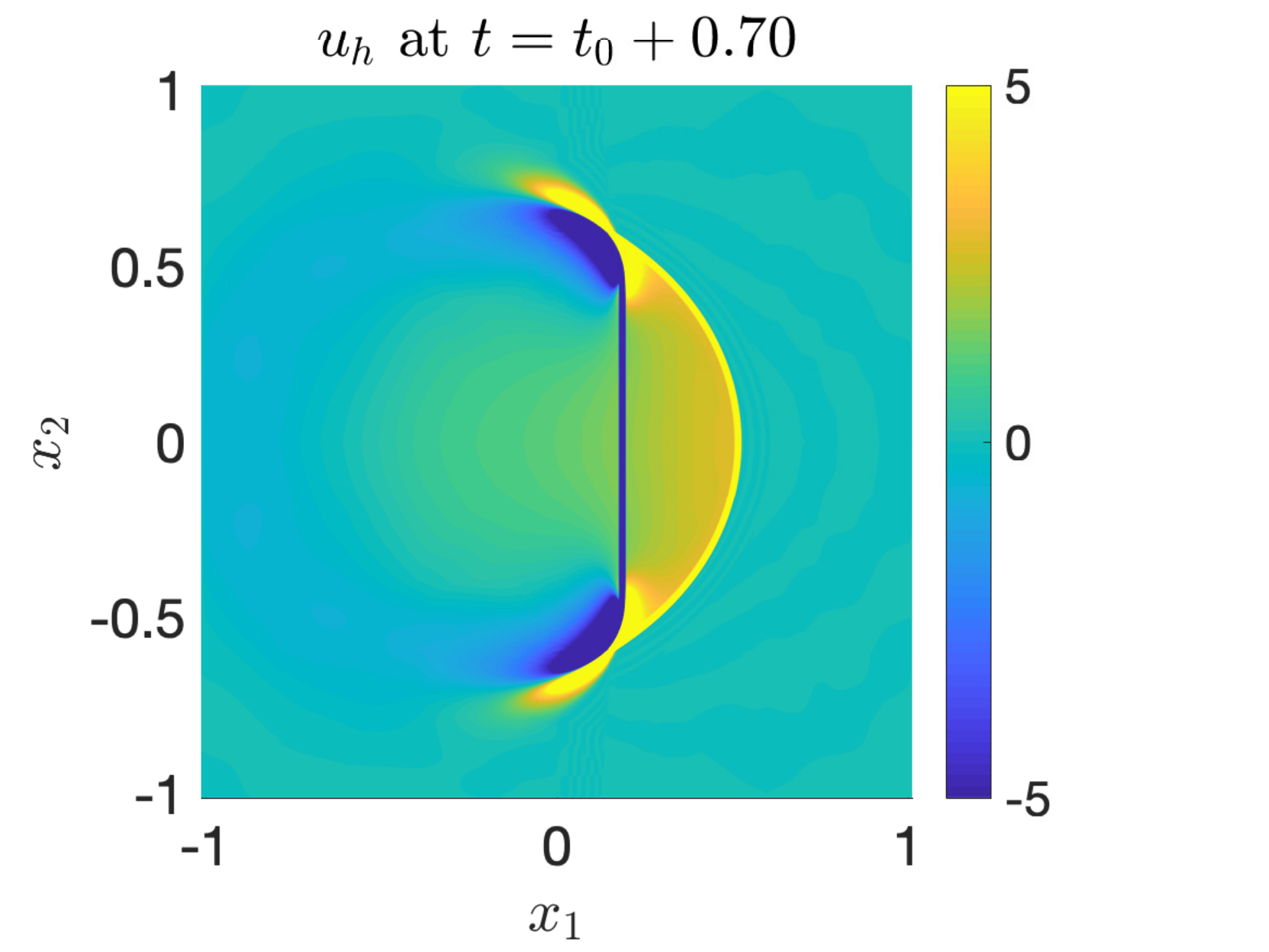}
\end{center}
\caption{Snapshot of $u_h$ at time $t=t_0+0.70$ for the 2D plane wave test case with $\omega=20\pi$ (left), $\omega=40\pi$ (middle), and $\omega=80\pi$ (right).}
\label{fig:uh}
\end{figure}

\begin{figure}[h]
  \begin{center}
    \begin{minipage}[h]{0.48\textwidth}
      \includegraphics[width=\textwidth]{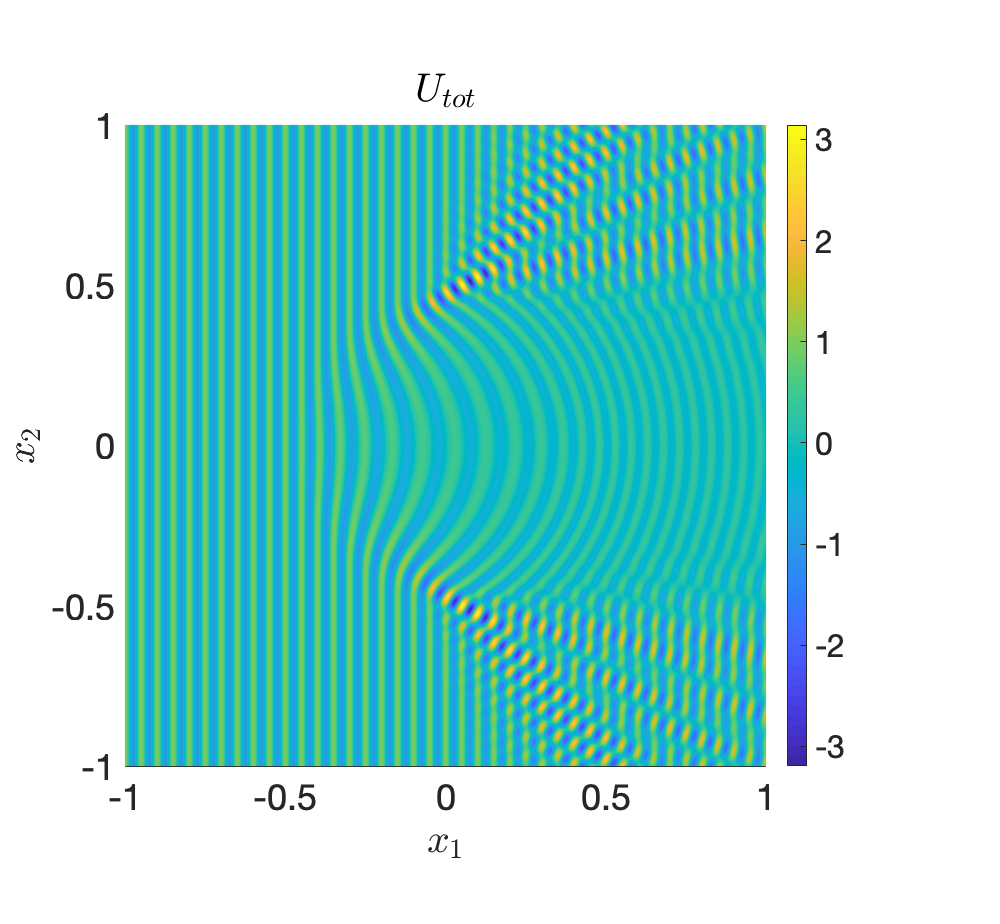}
    \end{minipage}
    \begin{minipage}[h]{0.455\textwidth}
      \includegraphics[width=\textwidth]{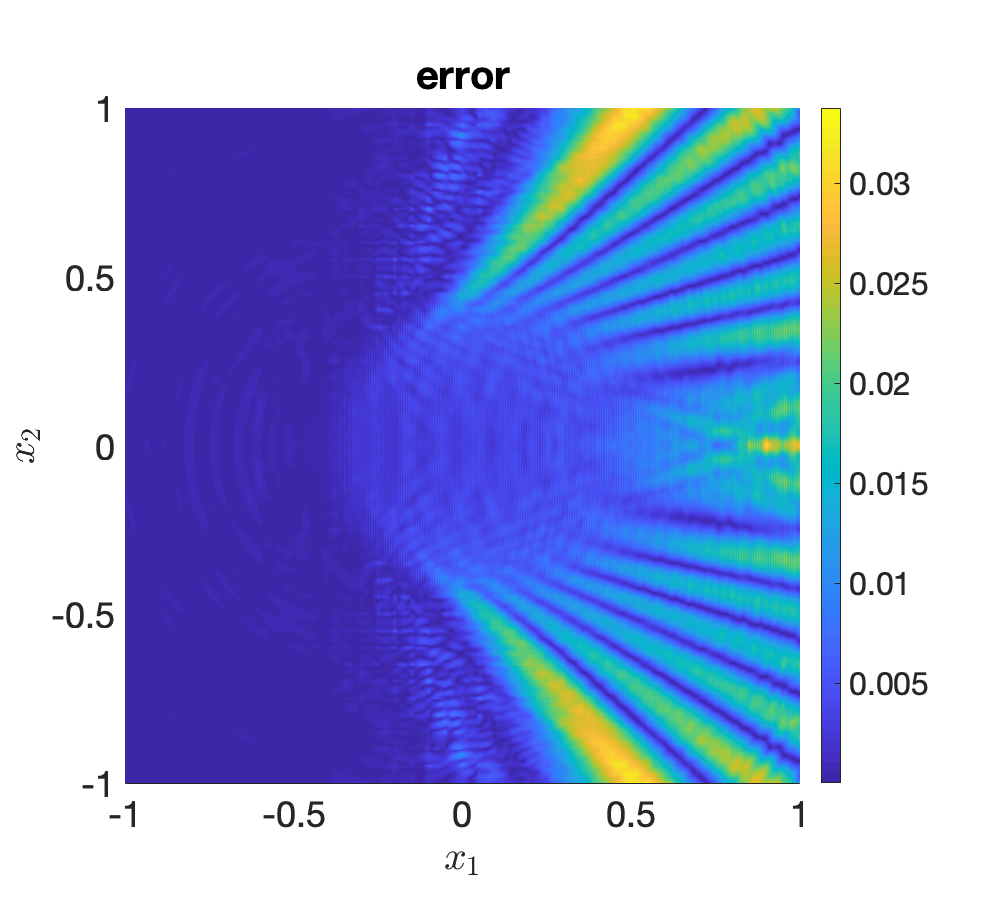}
      \end{minipage}
\end{center}
\caption{Total time-harmonic field $U_{tot}=U_h+U_I$ (left) and error $|U_S-U_h|$ (right) for the 2D plane wave test case with $\omega=40\pi$.}
\label{fig:Uh}
\end{figure}

To illustrate the adaptive mesh refinement procedure, we define, for each set of parent elements $\cP$, the function $\Call{level}{\cP}:\Omega
\rightarrow\mR$ as
\begin{align*}
\Call{level}{\cP}(\vx) &:= \max(\{0\}\cup\{k \;|\; \vx\in\e \text{ for some }\e\in\cP\cap\cT^k\}).
\end{align*}
In other words, $\Call{level}{\cP}(\vx)$ returns the level of the finest element in $\cP$ that contains $\vx$. If no element in $\cP$ contains $\vx$, then $\Call{level}{\cP}(\vx)$ returns $0$. An illustration of the mesh adaptation algorithm is given in Figure \ref{fig:meshUpdate}.

\begin{figure}[h]
\begin{center}
\includegraphics[width=0.45\textwidth]{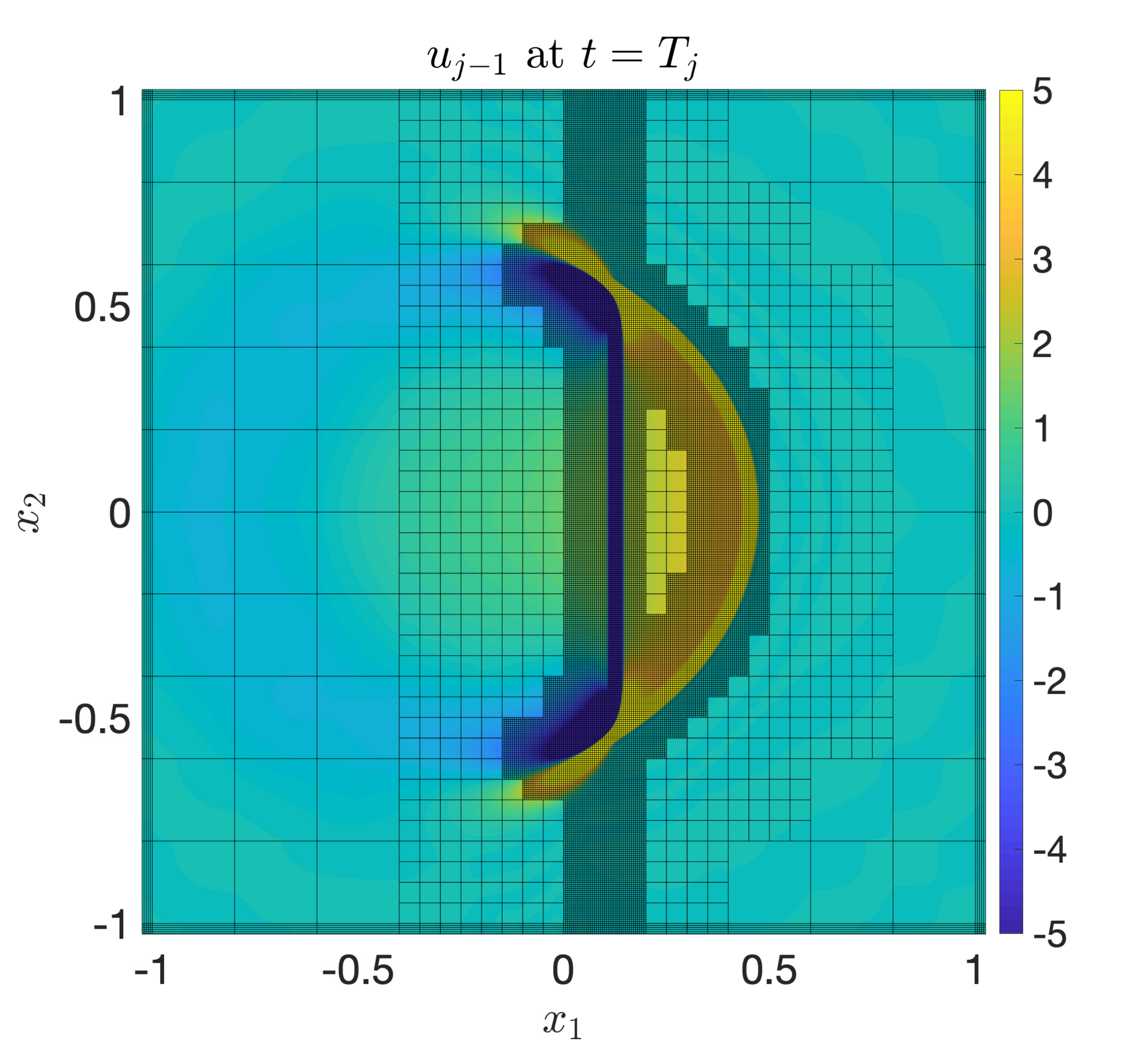}
\includegraphics[width=0.45\textwidth]{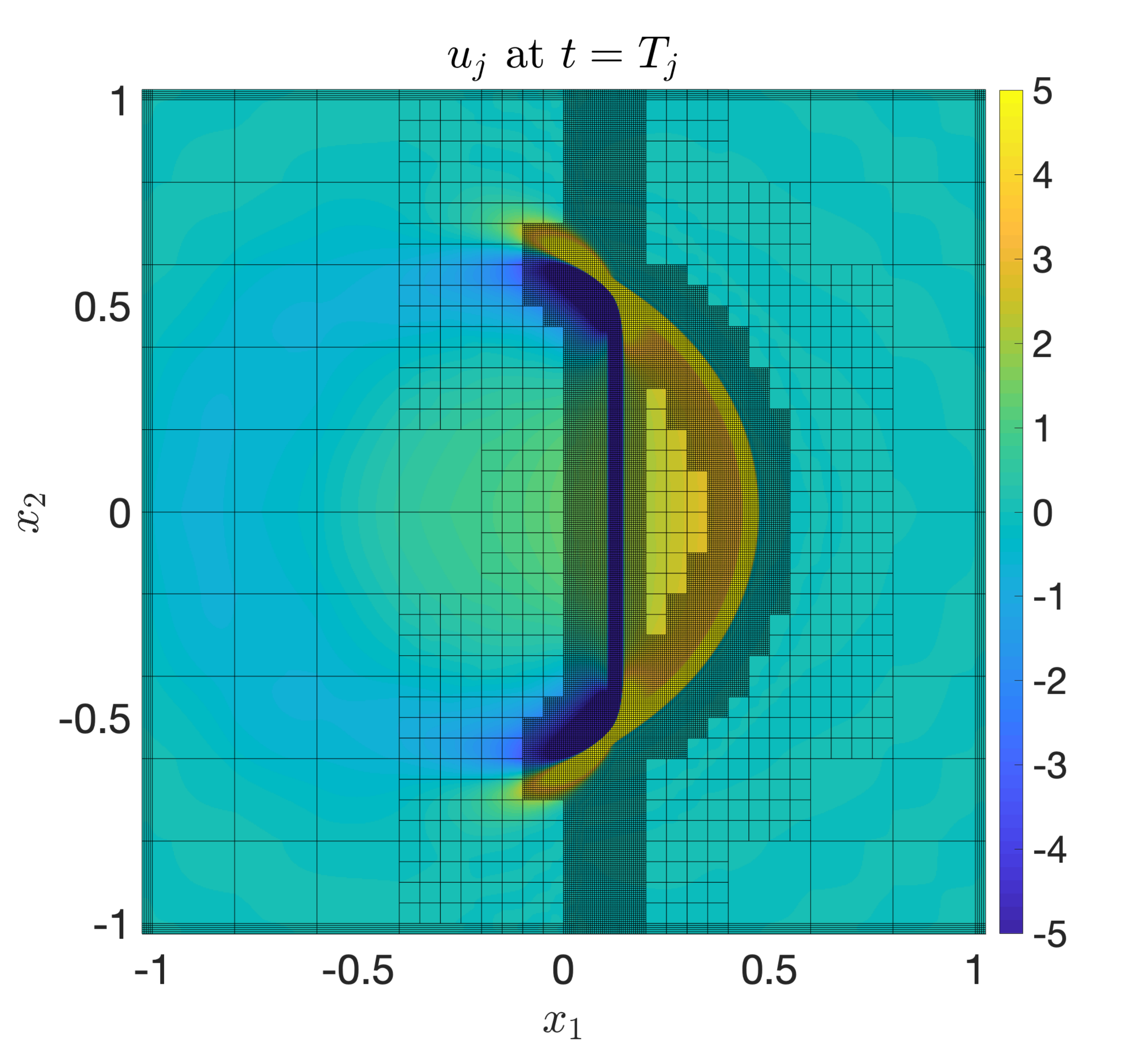} \\
\includegraphics[width=0.32\textwidth]{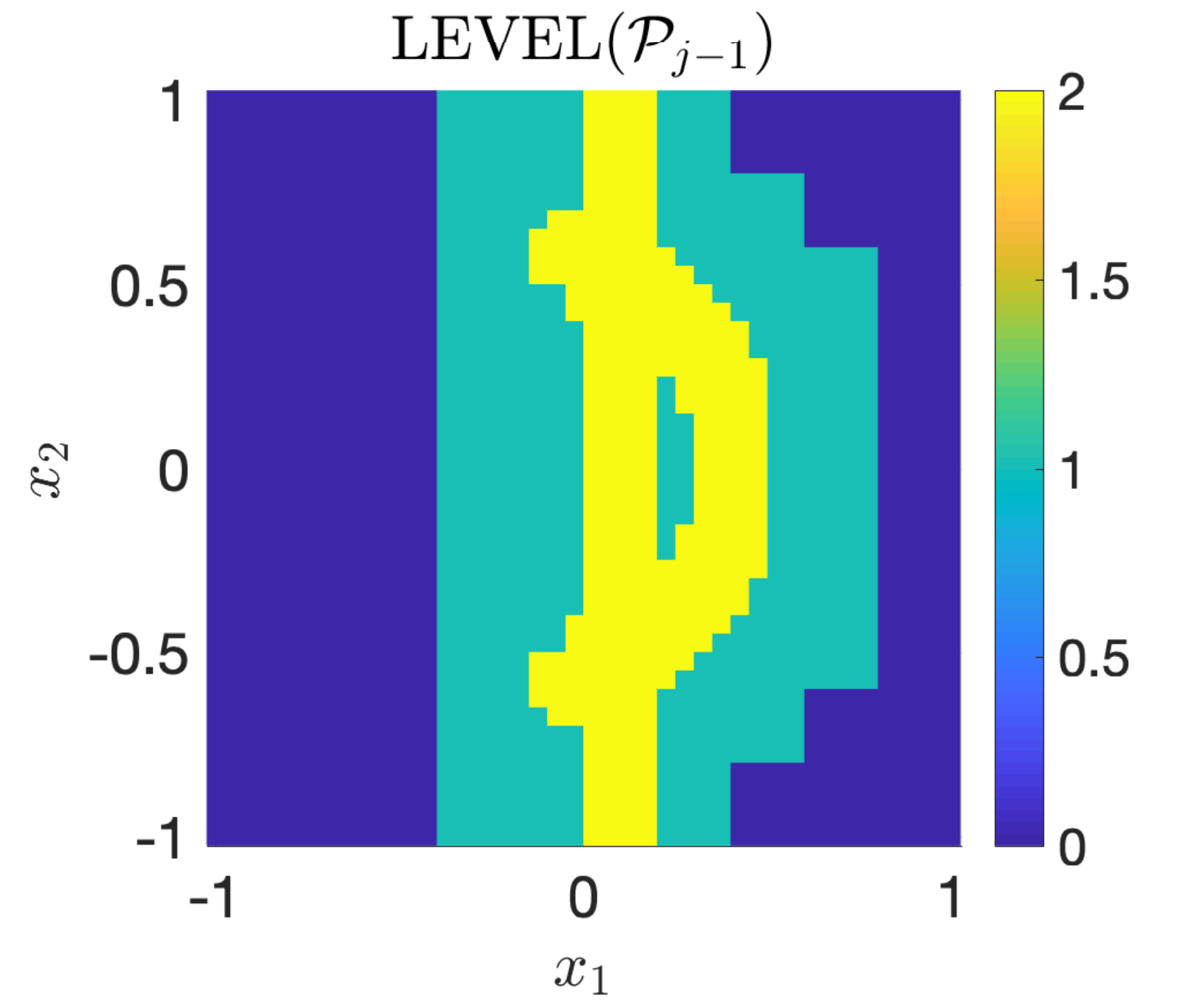}
\includegraphics[width=0.32\textwidth]{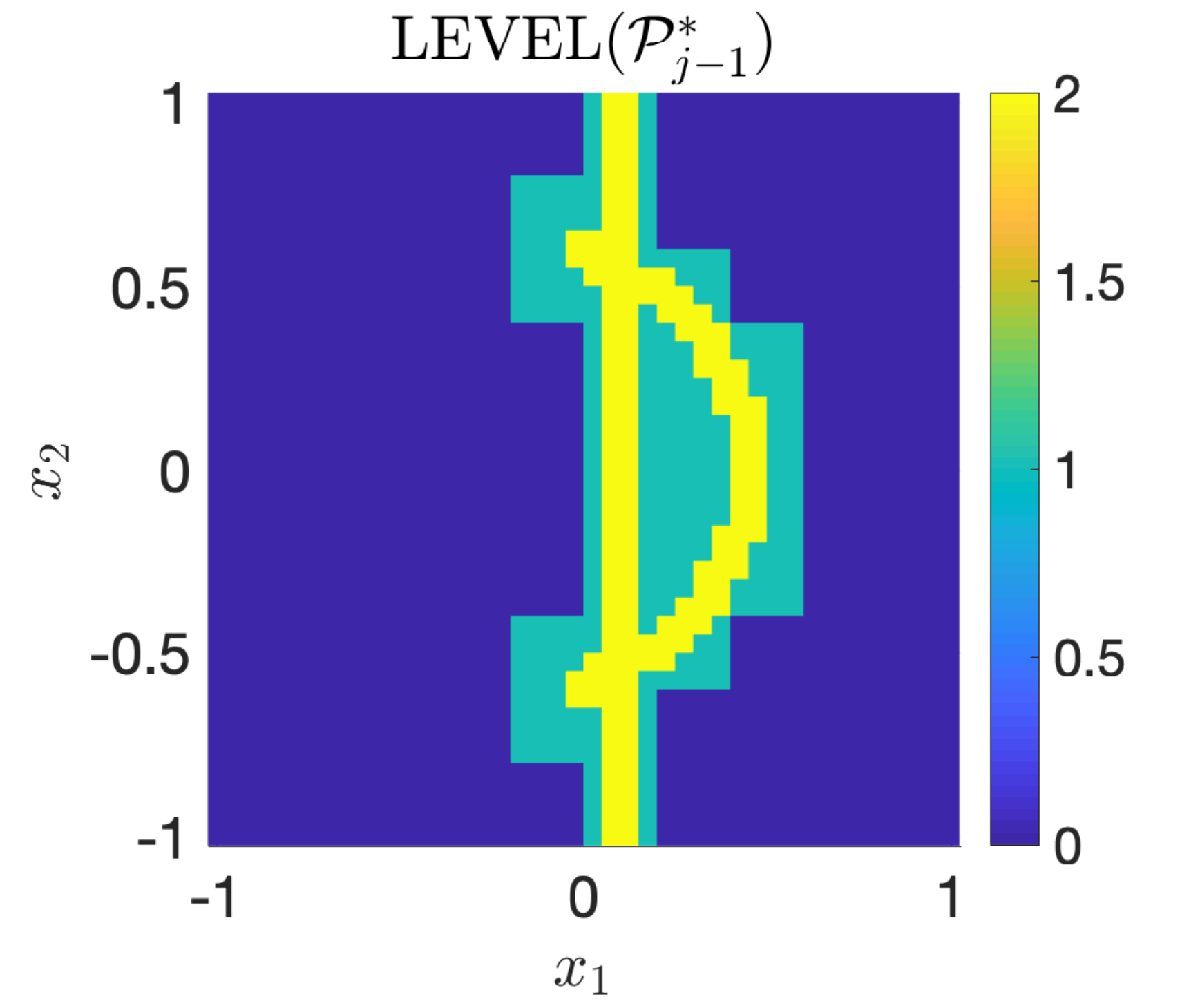}
\includegraphics[width=0.32\textwidth]{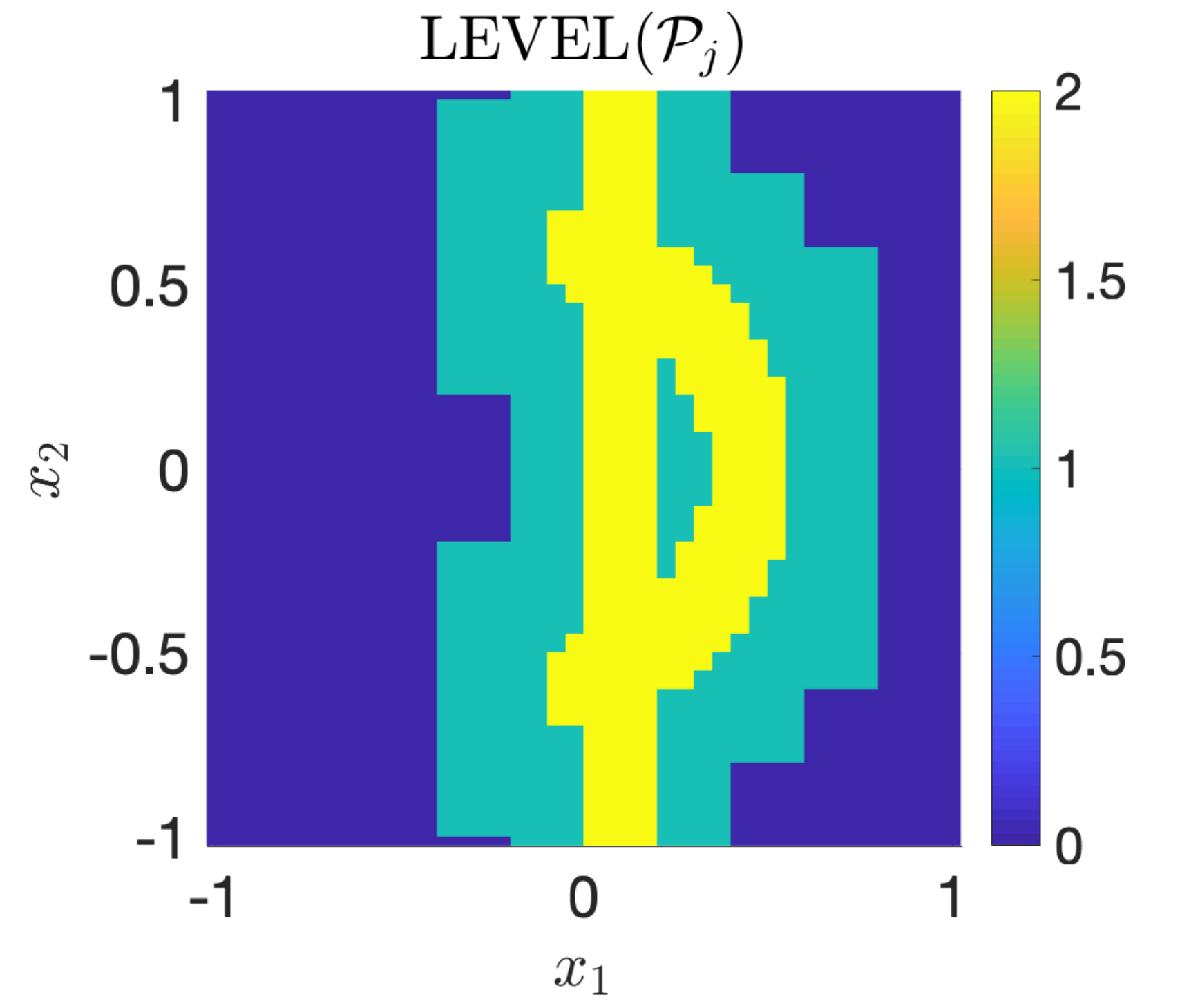}
\end{center}
\caption{Top: illustration of wave field $u_{j-1}$ and the updated wave field $u_j$ at time $T_j=t_0+0.65$, for the 2D plane wave test case with $\omega=40\pi$ using three levels of nested meshes with $\{h_k\}=\{\frac{1}{5},\frac{1}{20},\frac{1}{200}\}$. Bottom: parent elements $\textsc{level}(\cP_{j-1})$, marked elements $\textsc{level}(\cP^{*}_{j-1})$, and marked neighbour elements $\textsc{level}(\cP_j)$. Here, $\cP_{j-1}$, $\cP^{*}_{j-1}$, and $\cP_j$ are defined as in Algorithm \ref{alg:updateMesh}.}
\label{fig:meshUpdate}
\end{figure}

\subsection{Numerical example in 2D: point source}
\label{sec:num2Dpoint}
As a next 2D numerical example, we consider the Helmholtz equation of the form in \eqref{eq:Helmholtz2} with a slightly smeared out point source $F$ of the form
\begin{align*}
F(x_1,x_2) &:= \frac{200}{\lambda^2} f_0\left(\frac{\rho(x_1,x_2)}{\frac12\lambda}\right) \qquad
f_0(\xi) := \begin{cases}
(\xi^2-1)^4, &|\xi| \leq 1, \\
0, & \text{otherwise},
\end{cases}
\end{align*}
where 
$\lambda=2\pi/k_0$ denotes the wave length in the exterior domain, 
$k_0=\omega/c_0$ denotes the wave number in the exterior domain, $\rho(x_1,x_2) = \sqrt{(x_1-x_{1,0})^2 + (x_2-x_{2,0})^2}$ denotes the distance to $(x_{1,0},x_{2,0})$, and $(x_{1,0}, x_{2,0}):=(0.5, 0.5)$ is the position of the point source. Note that the support of $F$ is centered at $(x_{1,0},x_{2,0})$ and has a diameter of one wave length $\lambda$. The domain, absorbing boundary layer, and spatial parameters $\alpha$ and $\beta$ are chosen as in the previous example.

For the numerical approximation, we use the same spatial discretisation and parameters $\eta_0$ and $\epsilon_0$ as in the previous example. The initial time is $t_0=-\pi\omega^{-1}$. Similar to the previous examples, we compare the numerical results of the adaptive method with the classical finite element method for different frequencies. The results are presented in Table \ref{tab:err2b}. Again, the accuracy of the adaptive and classical method are comparable, whereas the average number of degrees of freedom is significantly smaller for the adaptive method and grows almost linearly with $\omega$ instead of quadratically.

\begin{table}[h]
\centering
{\tabulinesep=0.5mm
\begin{tabu}{|c| c c c c r r| r r|} \hline
& \multicolumn{6}{c|}{AFEM} & \multicolumn{2}{c|}{FEM} \\ 
$\omega$ & $\{h_k\}$ & $T_{up}$ & $m$ & $j_{stop}$ & $err_{2}$ & $\overline{n}_{DOF}$ & $err_{2}$ & $\overline{n}_{DOF}$ \\ \hline
$10\pi$ & $\{\frac{1}{5},\frac{1}{50}\}$ & $\frac{1}{10}$ & $39$ & $55$ & 1.26e-02 & 9.79e+03 & 8.07e-03 & 4.88e+04   \\
$20\pi$ & $\{\frac{1}{5},\frac{1}{10},\frac{1}{100}\}$ & $\frac{1}{20}$ & $39$ & $65$ & 1.16e-02 & 2.81e+04 & 1.01e-02 & 1.77e+05  \\
$40\pi$ & $\{\frac{1}{5},\frac{1}{20},\frac{1}{200}\}$ & $\frac{1}{40}$ & $39$ & $87$ & 1.55e-02 & 7.89e+04 & 1.40e-02 & 6.74e+05  \\ 
$80\pi$ & $\{\frac{1}{5},\frac{1}{40},\frac{1}{400}\}$ & $\frac{1}{80}$ & $39$ & $147$ & 2.14e-02 & 1.82e+05 & 1.96e-02 & 2.63e+06  \\
\hline
\end{tabu}
}
\caption{Estimated $L^2(\Omega_0)$ error and average number of degrees of freedom for the biquadratic adapted (AFEM) and classical (FEM) finite element approximation to the 2D point source Helmholtz problem for different angular frequencies $\omega$. To estimate the error, we take the numerical approximation on a uniform mesh of width $h_K/2$ as the exact solution. }
\label{tab:err2b}
\end{table}

An illustration of the adaptive method is given in Figure \ref{fig:pointSource}. The left image of Figure \ref{fig:pointSource} shows a sharp circular wave front generated by the point source and an adapted mesh that is only refined near this wave front. The right image shows the approximated time-harmonic wave field.

\begin{figure}[h]
\begin{center}
\includegraphics[width=0.45\textwidth]{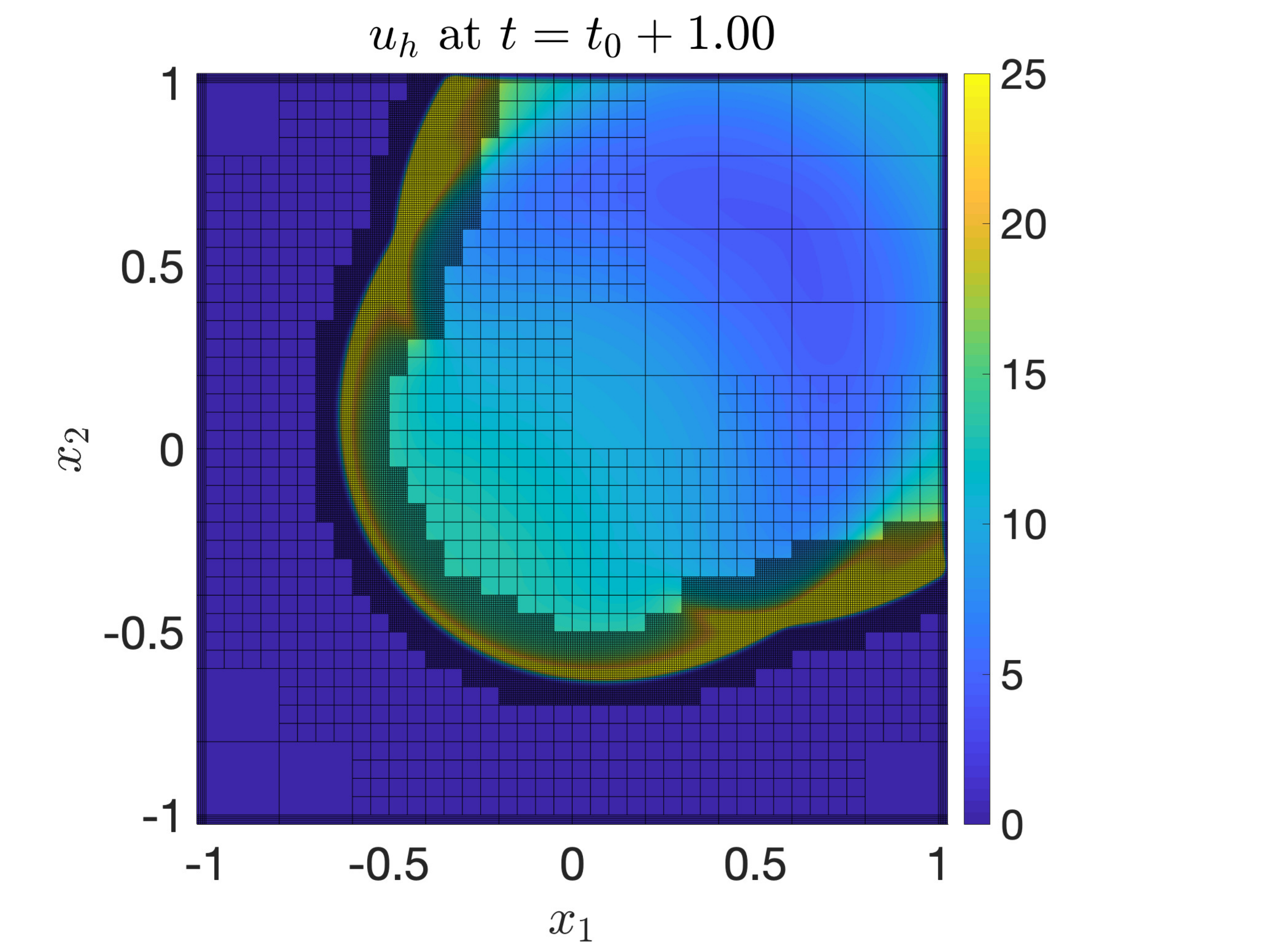} 
\includegraphics[width=0.45\textwidth]{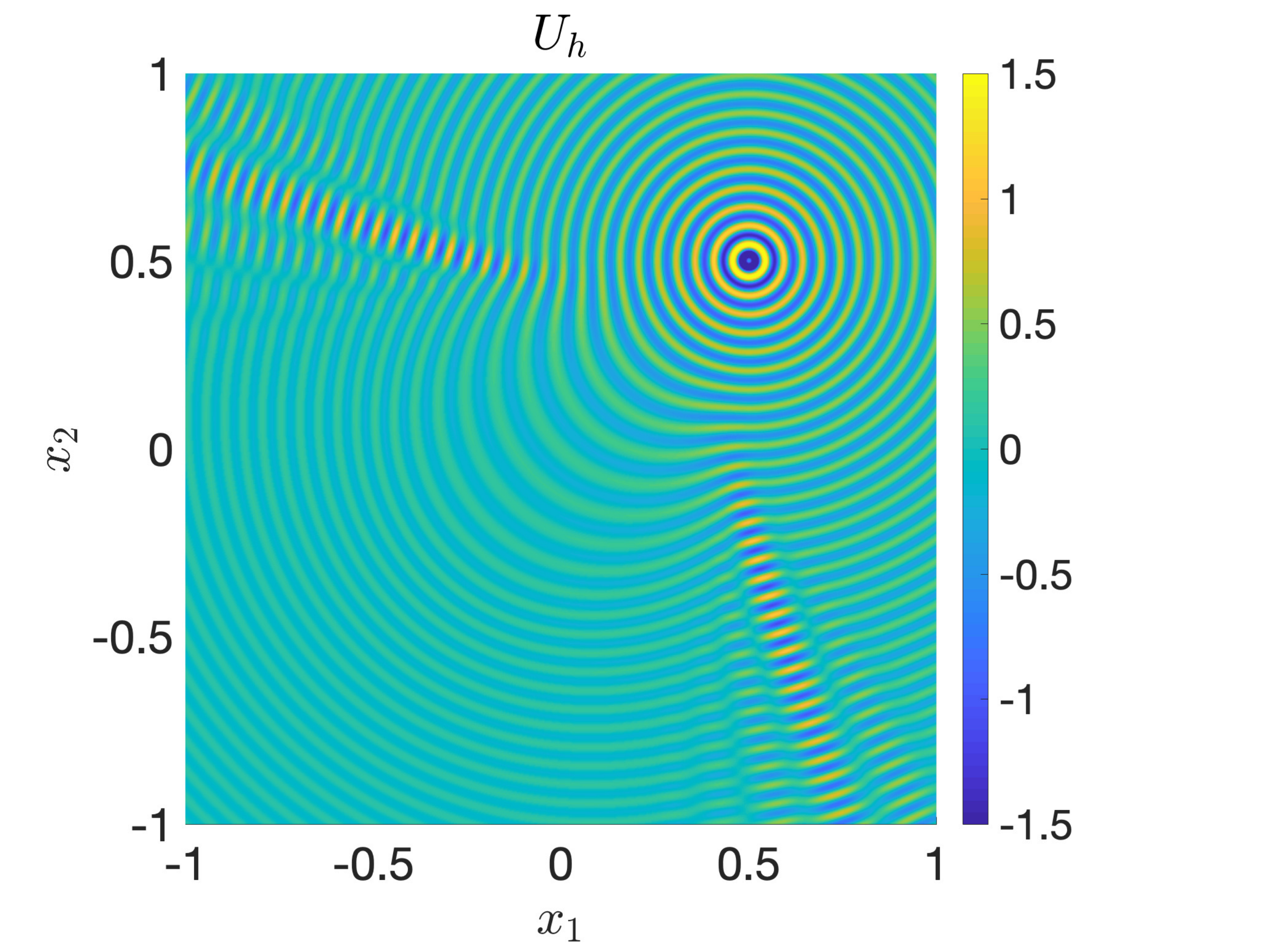} 
\end{center}
\caption{Snapshot of the wave field $u_h$ at time $T_j=t_0+1$ (left) and time-harmonic field $U_h$ (right) for the 2D point source test case with $\omega=40\pi$.}
\label{fig:pointSource}
\end{figure}

\subsection{Numerical example in 2D: trapping mode}
As a final 2D numerical example, we consider a scattering problem with a sound-soft scatterer that can trap waves. The scatterer is illustrated in Figure \ref{fig:trappingMode}. The domain, absorbing boundary layer, and incoming wave are chosen as in Section \ref{sec:num2Dplane} and the spatial parameters are given by $\alpha\equiv 1$, $\beta\equiv 1$.


For the numerical test, we only consider the case $\omega=30\pi$. We
use two nested meshes with mesh size
$\{h_k\}=\{\frac1{10},\frac1{150}\}$ and with biquadratic elements, a
mesh update time $T_{up}=1/30$, a mesh refinement criterion
$\xi_0=\frac{1}{100}\omega$ and a stopping criterion
$\epsilon_0=\frac{5}{100}\omega$. The initial time is
$t_0=0.4-\pi\omega^{-1}$, the number of time steps between each mesh
update is $m=20$, and the stopping criterion is triggered at
$t_{stop}=T_{j_{stop}}$, with $j_{stop}=332$. We compare the results
of the adaptive finite element method and classical finite element
method as in the previous examples. The results are listed in Table
\ref{tab:err2c}. An illustration of the adapted mesh and of the computed
time-harmonic wave field is also given in Figure
\ref{fig:trappingMode}. Due to trapping, it takes much longer before
the wave field vanishes, which makes time-domain approaches less
efficient. Furthermore, as illustrated in Figure
\ref{fig:trappingMode}, a large region around the trapping area requires a fine mesh, which makes the adaptive method less efficient. The average number of degrees of freedom is still smaller than for the classical finite element method, since the adaptive method correctly determines in which part of the domain the wave field is active.



\begin{table}[h]
\centering
{\tabulinesep=0.5mm
\begin{tabu}{|c| r r|} \hline
$\omega=30\pi$ & $err_2$ &$\overline{n}_{DOF}$ \\ \hline
AFEM & 1.77e-01 & 9.95e+04 \\
FEM & 1.57e-01 & 3.31e+05 \\ \hline
\end{tabu}
}
\caption{Estimated $L^2(\Omega_0)$ error and average number of degrees of freedom for the biquadratic adapted (AFEM) and classical (FEM) finite element approximation to the 2D trapping mode Helmholtz problem for angular frequencies $\omega=30\pi$. To estimate the error, we take the numerical approximation on a uniform mesh of width $h_K/2=1/300$ as the exact solution. }
\label{tab:err2c}
\end{table}

\begin{figure}[h]
\begin{center}
\includegraphics[width=0.45\textwidth]{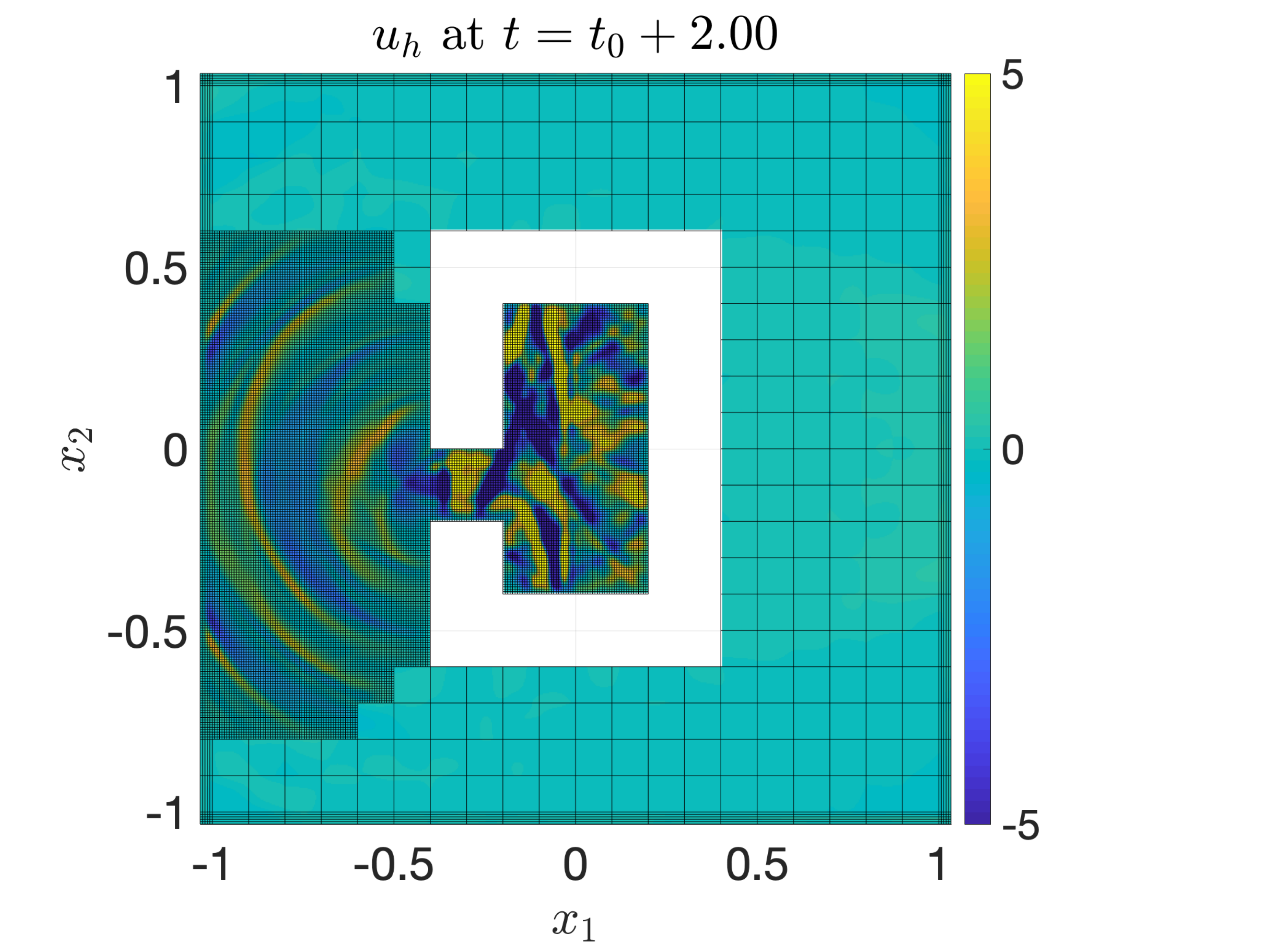} 
\includegraphics[width=0.45\textwidth]{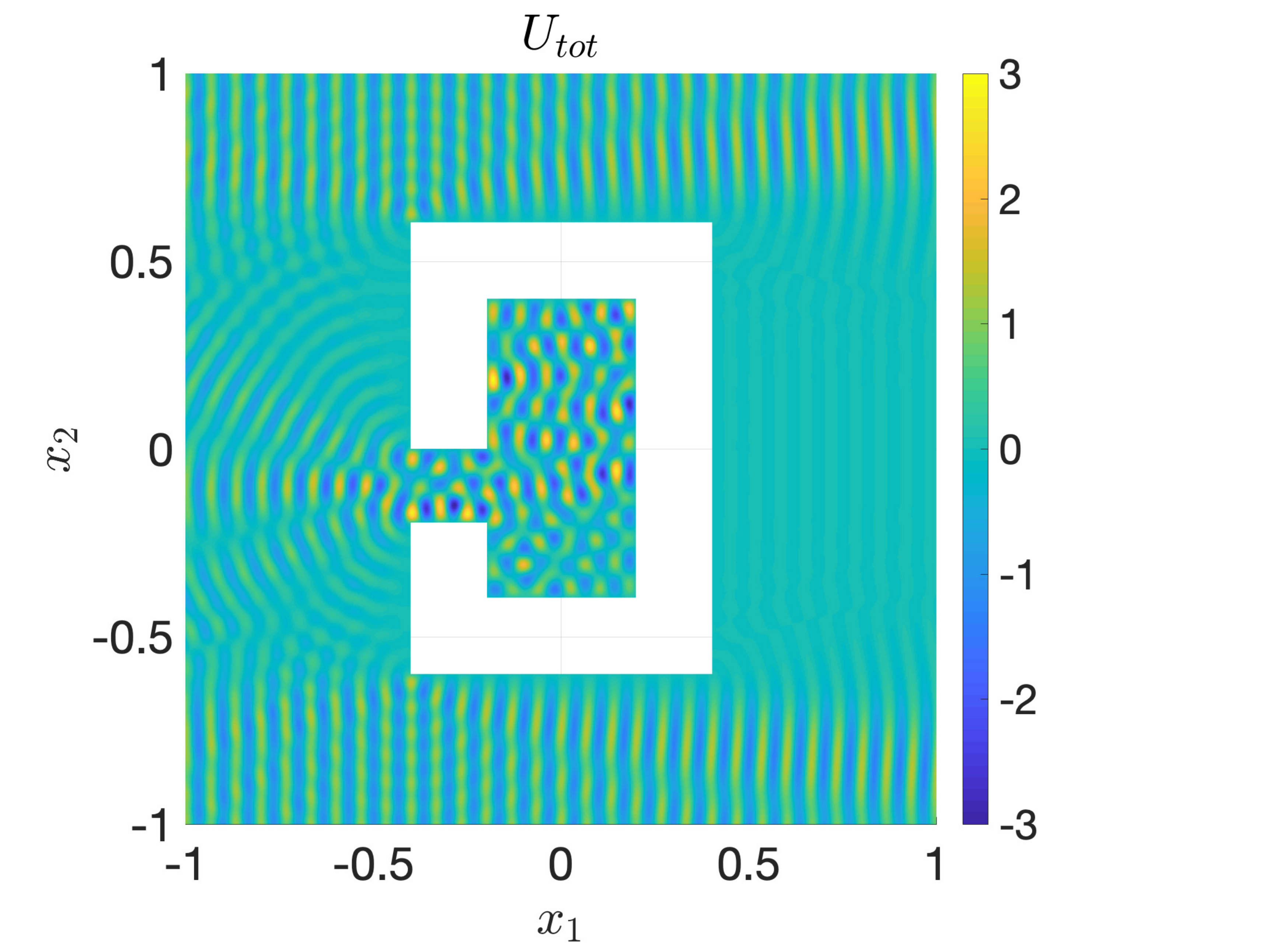} 
\end{center}
\caption{Snapshot of the wave field $u_h$ at time $T_j=t_0+2$ (left) and total time-harmonic field $U_{tot}=U_h+U_I$ (right) for the 2D trapping mode test case with $\omega=30\pi$.}
\label{fig:trappingMode}
\end{figure}

\section{Conclusion}
\label{sec:conclusion}
We considered the time-harmonic acoustic scattering problem with smoothly varying coefficients for an incoming plane wave of angular frequency $\omega$. The proposed method consists of solving the wave equation in the time domain for a single incoming plane wavelet using an adaptive mesh. The time-harmonic solution is then recovered by computing the Fourier transform in time using an adaptive algorithm that exploits the reduced number of degrees of freedom corresponding to the adapted meshes. We compared our adaptive finite element method to a standard classical finite element time domain method and show that the accuracy is comparable, whereas the average number of degrees of freedom for our adaptive method grows at a significantly smaller rate as the frequency $\omega$ increases. In particular, numerical examples indicate that the average number of degrees of freedom for the adaptive finite element method scales almost like $\cO(\omega^{d-1})$, with $d$ the number of dimensions in space, instead of $\cO(\omega^d)$. Numerical examples also demonstrate that our method can be extended to include external source terms and sound-soft scatterers. The method, however, provides only a limited advantage in the presence of trapping modes.

\appendix

\section{Limiting amplitude principle}
\label{sec:appFT}

\noindent
Let $U=U(\vx)$ be the solution to the Helmholtz problem
\begin{subequations}  
\label{eq:Helmholtz0}
\begin{align}
-\omega^2 U - \beta^{-1}\nabla\cdot(\alpha\nabla U) &= F &&\text{in }\mR^d, \\
[\text{far field radiation condition on }U], & &&
\end{align}
\end{subequations}
and let $u=u(\vx,t)$ be the
solution to the wave problem in the time domain 
\begin{subequations}  
\label{eq:wave0}
\begin{align}
\partial_t^2 u - \beta^{-1}\nabla\cdot(\alpha\nabla u) &= 
                                             f            
  &&\text{in }\mR^d\times(t_0,\infty), \\
[\text{zero initial conditions on }u\ \text{at }t=t_0], & &&
\end{align}
\end{subequations}%
with spatial parameters $\alpha=\alpha(\vx)$ and $\beta=\beta(\vx)$,
source terms $ f=f 
(\vx,t)$ and $F=F(\vx)$, and frequency $\omega>0$.

We assume that 
$\alpha(\vx)\geq\alpha_{\min}$ and $\beta(\vx)\geq\beta_{\min}$ for $\vx\in\mR^d$,
and that 
$\alpha(\vx)\equiv\alpha_0$ and $\beta(\vx)\equiv\beta_0$
for $\vx\in\mR^d\backslash\Omega_{in}$, where $\Omega_{in}$ is a bounded domain
and $\alpha_{\min}$, $\beta_{\min}$, $\alpha_0$, and $\beta_0$ are positive
constants. We also assume that $ f 
(\cdot,t)$ and $F$ 
are supported within $\Omega_{in}$. 
 
Let $\cU$ be a Hilbert space on a bounded domain $\Omega$ with
  $\Omega\supset\Omega_{in}$.
  The limiting amplitude principle states that, if $f$ 
  is of the form
  $f 
  (\vx,t)=F(\vx)e^{-\im\omega t}$, then $u(\cdot,t)$ converges in $\cU$ to $Ue^{-\im\omega t}$ as $t$ tends to infinity. In particular, we can define the limiting amplitude principle as follows:
\begin{dfn}[limiting amplitude principle] 
Let $u$ be the solution to the wave equation given in
\eqref{eq:wave0}, with $f 
(\vx,t):=F(\vx)e^{-\im\omega t}$ for all $t>T$ for some $T>t_0$, and let $U$ be the solution to the Helmholtz equation given in \eqref{eq:Helmholtz0}. The limiting amplitude principle states that
\begin{equation}\label{eq:LAP}
\lim_{t\rightarrow\infty}\left\| u\left(\cdot,t\right)-U\left(\cdot\right)e^{-i\omega t}\right \|_{\cU}=0.
\end{equation}
\end{dfn}

The following is known about the validity of the limiting amplitude
principle, with ${\rm supp}(F)\subset\Omega\subset \mR^d$.
\begin{itemize}
\item For $d=3$, \eqref{eq:LAP} was derived
in~\cite{Tamura} for $F\in L^2(\Omega)$, $\alpha\in\cC^2(\mR^3)$, $\beta\in\cC^1(\mR^3)$, $\cU=L^2(\Omega)$.
\item For $d\geq 2$, 
it follows from \cite[Ch.~2]{eidus1969} that \eqref{eq:LAP} holds true for $F\in L^2(\Omega)$, $\alpha\in\cC^2(\mR^d)$, $\beta\equiv \beta_0$, $\cU=H^1(\Omega)$.
\item For $d=1$, the form~\eqref{eq:LAP}  of the limiting
  amplitude principle is not valid~\cite[Sect.~3]{eidus1962en}; a modified form is
currently under investigation and will be presented in a forthcoming paper.
\end{itemize}

Whenever the limiting amplitude principle is valid for $\cU=L^2(\Omega)$, we have the following result.

\begin{lem}
\label{lem:FT}
Let $u$ be the solution to \eqref{eq:wave0} with a source term $f$
that has compact support in space and time. Extend $u$ and $f$ by zero
to $\mR^d\times(-\infty,t_0)$ and define, for any frequency
$\omega>0$, $\tU_\omega:=\Ft[u](\cdot,-\omega)$ and
$\tF_{\omega}:=\Ft[f](\cdot,-\omega)$, where $\Ft$ denotes the Fourier
transform with respect to time. If the limiting amplitude principle is
valid for $\cU=L^2(\Omega)$, then, for any bounded domain
$\Omega\subset \mR^d$ with ${\rm supp}(f(\cdot,t))\subset\Omega$,
we have that $\tU_{\omega}$ is the solution to the Helmholtz equation given in \eqref{eq:Helmholtz0} with source term $F=\tF_{\omega}$.
\end{lem}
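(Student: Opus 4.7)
The plan is to show that $\tilde U_\omega$, viewed as an element of $L^2(\Omega)$, satisfies the Helmholtz PDE on $\mR^d$ with source $\tilde F_\omega$ together with the far-field radiation condition; uniqueness of the outgoing Helmholtz solution then identifies $\tilde U_\omega$ with the solution of \eqref{eq:Helmholtz0}. Concretely, I would verify (i) that the time-Fourier integral defining $\tilde U_\omega$ converges in $L^2(\Omega)$, (ii) that $\Ft$ commutes with both $\partial_t^2$ and the spatial operator $\beta^{-1}\nabla\cdot(\alpha\nabla\,\cdot\,)$, and (iii) that the outgoing character of $u$ translates into a Sommerfeld-type condition on $\tilde U_\omega$.

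For (i), observe that $u\equiv 0$ on $\mR^d\times(-\infty,t_0)$ and that $f$ vanishes for $t>t_f$ for some finite $t_f$, by compact support in time. Since the LAP as stated concerns only time-harmonic sources, I would bridge the gap by applying the LAP to the auxiliary source $\hat f(\vx,t):=f(\vx,t)+\tilde F_\omega(\vx)e^{-\im\omega t}\chi_{[t_f,\infty)}(t)$, which coincides with $\tilde F_\omega(\vx)e^{-\im\omega t}$ for $t>t_f$ and hence fits the LAP hypothesis with $T=t_f$. Let $\hat u$ denote the corresponding wave solution and $U^H$ the Helmholtz solution with source $\tilde F_\omega$; the LAP yields $\hat u(\cdot,t)-U^H(\cdot)e^{-\im\omega t}\to 0$ in $L^2(\Omega)$. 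The difference $\hat u-u$ solves the wave equation with source $\tilde F_\omega(\vx) e^{-\im\omega t}\chi_{[t_f,\infty)}(t)$ and zero initial data at $t_0$, and a second application of the LAP (with the same $T=t_f$) makes it asymptotic to $U^H(\vx)e^{-\im\omega t}$ in $L^2(\Omega)$. Subtracting the two asymptotics gives $\|u(\cdot,t)\|_{L^2(\Omega)}\to 0$ as $t\to\infty$, which makes the improper integral defining $\tilde U_\omega$ meaningful in $L^2(\Omega)$.

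For (ii), I would apply $\Ft$ to both sides of \eqref{eq:wave0} and integrate by parts twice in $t$ on the $\partial_t^2$ term. The boundary contributions at $t=t_0$ vanish by the zero initial conditions, and those at $t\to\infty$ vanish thanks to the decay from step (i). Fubini lets $\Ft$ pass through the spatial elliptic operator, so that evaluating at $\tilde\omega=-\omega$ yields the Helmholtz equation
\begin{equation*}
-\omega^2\tilde U_\omega-\beta^{-1}\nabla\cdot(\alpha\nabla\tilde U_\omega)=\tilde F_\omega\qquad\text{in }\mR^d.
\end{equation*}
For (iii), outside $\Omega_{in}$ the coefficients are constant and the source vanishes, so finite propagation speed and the initial vanishing of $u$ imply that $u$ is a purely outgoing wave there. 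Its time Fourier transform at frequency $-\omega$ is therefore an outgoing time-harmonic field, i.e.\ it satisfies the Sommerfeld far-field radiation condition in $\mR^d\setminus\overline{\Omega}_{in}$.

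The main obstacle is step (i): rigorously extracting local energy decay of $u$ in $L^2(\Omega)$ from the LAP, which is itself formulated only for time-harmonic forcing. The composite-source argument above is the natural route, but care is needed both in identifying $\hat u-u$ with the explicit tail solution (via uniqueness for the time-domain wave problem) and in upgrading the pointwise-in-time convergence supplied by the LAP to an integrability statement strong enough to justify both the improper Fourier integral defining $\tilde U_\omega$ and the vanishing of boundary terms in the integration by parts of step (ii). Once this decay is in hand, the remaining exchanges of limits, differentiation, and integration are standard.
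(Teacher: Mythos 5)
Your overall strategy---use the limiting amplitude principle on an auxiliary, eventually time-harmonic source to control the large-time behaviour of $u$, then Fourier transform the wave equation---is in the right spirit, but the crux of the argument is exactly the step you flag at the end as ``the main obstacle'', and as written it is a genuine gap, not a technicality. Your composite-source argument only delivers $\|u(\cdot,t)\|_{L^2(\Omega)}\to 0$ as $t\to\infty$. Decay of the integrand to zero does not imply convergence of the improper integral $\int_{t_0}^{\infty}e^{\im\omega t}u(\cdot,t)\,\dd t$ in $L^2(\Omega)$, so step (i) does not actually make $\tU_\omega$ well defined; and step (ii) additionally needs the boundary terms $e^{\im\omega t}\partial_t u(\cdot,t)$ and $\im\omega e^{\im\omega t}u(\cdot,t)$ to vanish at infinity together with an integrability statement justifying $\Ft[\partial_t^2 u]=-\omega^2\Ft[u]$, none of which follows from $L^2$-decay of $u$ alone. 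Step (iii) is also only a sketch: passing from ``$u$ is outgoing in the exterior'' to a Sommerfeld condition on $\tU_\omega$ requires an argument (e.g.\ a retarded-potential representation), and you then still need a uniqueness theorem for the outgoing Helmholtz problem with variable coefficients.

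The paper closes all of this with one device that you may want to compare with your approach: it convolves the wave equation in time with $G(t):=H(t)e^{-\im\omega t}$. Because $f$ has compact support in time, $(G*_t f)(\cdot,t)=e^{-\im\omega t}\tF_\omega$ for all large $t$, so the limiting amplitude principle applies directly to $G*_t u$ and gives $(G*_t u)(\cdot,t)-Ue^{-\im\omega t}\to 0$ in $L^2(\Omega)$, where $U$ is the Helmholtz solution with source $\tF_\omega$. The point is that
\begin{equation*}
e^{\im\omega t}(G*_t u)(\cdot,t)=\int_{t_0}^{t}e^{\im\omega s}u(\cdot,s)\,\dd s
\end{equation*}
is precisely the partial Fourier integral, so the LAP simultaneously proves that the improper integral converges in $L^2(\Omega)$ and identifies its limit with $U$. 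No decay estimate for $u$, no integration by parts, and no separate verification of the radiation condition or uniqueness argument is needed, since $U$ is by definition the outgoing solution. To repair your proof you would either need to adopt this convolution trick or supply a quantitative (integrable-in-time) local energy decay estimate, which is substantially stronger than what the LAP as stated provides.
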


\begin{proof}
Fix $\omega>0$ and let $U$ be the solution to \eqref{eq:Helmholtz0} with source term $F=\tF_{\omega}$.
We need to show that $\tU_{\omega}=U$. To do so, 
define $G(t):= H(t)e^{-\im\omega t}$, where $H(t)$ denotes the Heaviside step function ($H(t)=1$ for $t\geq 0$ and $H(t)=0$ for $t<0$). Also, let $*_t$ denote the convolution operator with respect to time. If we apply $G \; *_t$ to \eqref{eq:wave0}, we obtain
\begin{subequations}
\label{eq:waveG}
\begin{align}
\partial_t^2 (G*_t u) - \beta^{-1}\nabla\cdot(\alpha\nabla (G*_t u)) &= (G*_t f) &&\text{in }\mR^d\times(t_0,\infty), \\
[\text{zero initial conditions on }(G*_t u) \text{ at }t=t_0]. & &&
\end{align}
\end{subequations}
Since $f$ has finite support in time, we have that $f(\cdot,t)=0$ for
all $t>T$ for some $T>t_0$. Therefore, $(G*_t f)=e^{-\im\omega t}\tF_{\omega}$ for $t>T$. It then follows from the limiting
amplitude principle 
that
$(G*_t u)(\cdot,t)$ converges to $e^{-\im\omega t}U$ as
$t\rightarrow\infty$ in $\cU=L^2(\Omega)$. In other words, $\lim_{t\rightarrow\infty} e^{\im\omega t}(G*_t u)(\cdot,t) = U$. By definition of $\tU_{\omega}$, we also have that $\lim_{t\rightarrow\infty} e^{\im\omega t}(G*_t u) = \tU_{\omega}$ and hence, $\tU_{\omega}=U$.
\end{proof}

\bibliographystyle{abbrv}
\bibliography{Helmholz}

\end{document}